\newtheorem{theorem}{Theorem}[section]
\newtheorem{proposition}[theorem]{Proposition}
\newtheorem{lemma}[theorem]{Lemma}
\newtheorem{corollary}[theorem]{Corollary}
\theoremstyle{definition}
\newtheorem{definition}[theorem]{Definition}
\newtheorem{remark}[theorem]{Remark}
\newtheorem{example}[theorem]{Example}
\newcommand{\im}{\operatorname{Im}}
\newcommand{\height}{\operatorname{height}}
\newcommand{\spec}{\operatorname{Spec}}
\newcommand{\V}{\operatorname{V}}
\newcommand{\supp}{\operatorname{Supp}}
\newcommand{\Mon}{\operatorname{Mon}}
\newcommand{\Hom}{\operatorname{Hom}}
\newcommand{\R}{\operatorname{R}}
\newcommand{\Der}{\operatorname{Der}}
\DeclareMathOperator{\link}{lk}
\newcommand{\bfa}{\mathbf{a}}
\newcommand{\bfb}{\mathbf b}
\newcommand{\kk}{\Bbbk}
\newcommand{\ZZ}{\mathbb{Z}}
\newcommand{\NN}{\mathbb{N}}
\newcommand{\HomI}{\Hom_{R}(I,R/I)}
\newcommand{\Nb}{\overline{N}}
\newcommand{\HomK}{\Hom_{R}(K/K_0,R/I)}
\begin{document}

\title{Cotangent cohomology of quadratic monomial ideals}
\author{Amin Nematbakhsh}
\address{School of Mathematics, Institute for Research in Fundamental Sciences (IPM), P. O. Box: 19395-
5746, Tehran, Iran}
\email{nematbakhsh@ipm.ir}
\date{\today}
\subjclass[2010]{13D10} 

\begin{abstract} We study the deformation theory of quotients of polynomial rings by quadratic monomial ideals. More precisely we compute the first cotangent cohomology module of such rings. We also give a criterion for vanishing of second cotangent cohomology module.
\end{abstract}

\maketitle

\section*{Introduction}

In deformation theory of affine schemes there is a cohomology theory which assigns to any $\kk$-algebra $A$ two cohomology modules called the first and second cotangent cohomology modules denoted by $T^1(A)$ and $T^2(A)$. We refer to Section \ref{FirstOrder} for definitions. The first cotangent cohomology module characterizes the first order deformations of $A$ and the second cotangent cohomology module contains the obstructions for lifting these deformations.
In this paper we investigate cotangent cohomology of quotients of polynomial rings by quadratic monomial ideals.

Any ideal in a polynomial ring with a Gr\"obner basis consisting of quadrics degenerate to a quadratic monomial ideal.
Such ideals include Hibi ideals and Pl\"ucker relations of Grassmann varieties.

A quadratic monomial ideal $I$ in a polynomial ring $R=\kk[x_1,\ldots,x_n]$ gives rise to a (not necessarily simple) graph $G=(V(G),E(G))$ where $V(G)=\{x_1,\ldots,x_n\}$ and $E(G)=\{\{x_i,x_j\}~|~ x_ix_j \in I\}$.
We use the combinatorics of the corresponding graph to describe a generating set for the first cotangent cohomology module of the ring $R/I$ as well as vanishing results for the second cotangent cohomology module.


\medskip
{\it Cotangent cohomology of Stanley-Reisner rings.}
Deformation theory of square-free monomial ideals have been studied by Klaus Altmann and Jan Arthur Christophersen in \cite{AltCh-Cotang,AltCh-DefoSR}.
If $I$ is a monomial ideal in a polynomial ring $R=\kk[x_1,\ldots,x_n]$ then 
$T^1(R/I)$ is $\ZZ^n$-graded.
When in addition $I$ is a square-free monomial ideal then there is a unique simplicial complex $\Delta$ on $n$ vertices, such that $I$ is the Stanley-Reisner ideal of $\Delta$.
For a subset $ g \subseteq [n]$, the {\it link} of $g$ in $\Delta$ is defined to be
\[
\link_{\Delta} G = \{f \in \Delta~|~ f \cap g = \emptyset, f \cup g \in \Delta\}.
\]
In \cite{AltCh-Cotang,AltCh-DefoSR} the authors give a combinatorial description of each $\ZZ^n$-graded part of $T^1(R/I)$.
More precisely, let $\mathbf{c} \in \ZZ^n$ be a multidegree and suppose $\mathbf{c}=\mathbf{a}-\mathbf{b}$ with $\mathbf{a},\mathbf{b}\in \NN$ and $\supp \bfa \cap \supp \bfb=\emptyset$. Recall that for a multidegree $\bfa =(a_1,\ldots,a_n) \in \ZZ^n$, $\supp \bfa =\{i \in [n] | a_i\neq 0\}$. We have
\begin{enumerate}
\item if $\mathbf{b}\notin \{0,1\}^n$ then $T^1(R/I)_{\mathbf{a}-\mathbf{b}} =0$;
\item if $\mathbf{b}\in \{0,1\}^n$ then $T^1(R/I)_{\mathbf{a}-\mathbf{b}} = T^1(\link_\Delta \supp \mathbf a)_{-\mathbf{b}}$.
\end{enumerate}
This description of the first cotangent cohomology is essentially used in \cite{ABHL} to classify rigid square-free monomial ideals and in particular find the class of rigid edge ideals of graphs. Due to simplicity of relations of a quadratic monomial ideal it is possible to apply a more direct approach towards description of the first cotangent cohomology module. Here we construct a homogeneous generating set for $T^1(R/I)$ as a $\ZZ$-graded module. This generating set is easier to use when examining the rigidity of a quadratic monomial ideal.

\medskip
{\it Rigidity.}
A $\kk$-algebra $A$ is called {\it rigid} if all infinitesimal deformations of $A$ are trivial, i.e $T^1(A)=0$.
Let $G$ be a simple graph on vertex set $x_1,\ldots,x_n$ and let $R=\kk[x_1,\ldots,x_n]$ be a polynomial ring on variables $x_i$.
The {\it edge ideal} of $G$ denoted by $I(G)$ is the ideal in $R$ generated by quadratic monomials $x_ix_j$ such that $\{x_i,x_j\}$ is an edge of $G$.
An {\it independent set} $A$ of a graph $G$ is a subset of $V(G)$ for which no two vertices of an edge of $G$ belong to $A$.
For a vertex $x$ of $G$ let $N(x) = \{y \in V(G)| \{x,y\} \in E(G)\}$ be the {\it neighborhood} of $x$.
We denote by $\overline{N}(x)$ the complementary graph of the induced subgraph of $G$ on vertex set $N(x)$.
The {\it neighborhood} of a set $X$ of vertices of $G$ is defined to be $N(X)=\cup_{x\in X} N(x)$, and the {\it closed neighborhood} of $X$ is defined to be $N[X] = X\cup N(X)$. We also denote the induced subgraph of $G$ on the vertex set $V(G)\backslash X$ by $G\backslash X$.
In \cite{ABHL}, it is shown that $R/I(G)$ is rigid if and only if any independent subset $X$ of $G$ satisfies both of the following conditions.
\begin{enumerate}
\item $\overline{N}(x)$ is connected for all vertex $x$ of graph $G\backslash N[X]$;
\item $G\backslash N[X]$ contains no isolated edge.
\end{enumerate}
In Theorem \ref{thm-mainrigidity}, we give another characterization for rigidity of quadratic monomial ideals. This characterization examines the rigidity of the edge ideal of a graph $G$ in small neighborhoods of vertices and edges of $G$.


\medskip
{\it Inseparable graphs}. We call a graph $G$ {\it inseparable} if its edge ideal $I(G)$ is inseparable (see Section \ref{Separation} for definitions). A combinatorial characterization of inseparable simple graphs is given in \cite[Theorem 3.1]{ABHL}. Separation of edge ideal of a (not necessarily simple) graph is again a quadratic monomial ideal. Let $J$ be a separation of edge ideal $I(G)$ of a graph $G$. In Section \ref{Separation} we give a construction for a graph $H$ for which its edge ideal gives the ideal $J$. This approach towards a characterization of inseparable graphs is in spirit the same as what the authors did in \cite[Section 3]{ABHL}. 

\medskip
{\it Organization of paper.} In Section \ref{Sec-defo}, we first recall some notions on graphs and their edge ideals. In Sections \ref{Polarization} and \ref{Separation}, we define polarization and separations of monomial ideals and describe such operations in case of edge ideals of graphs.
Polarization and separations of a monomial ideal are special cases of deformations. We investigate whether these ideals posses other deformations.
In Section \ref{FirstOrder}, we also provide preliminaries on the deformation theory of rings.

Section \ref{T1} contains the computation of the first cotangent cohomology module. We give a generating set for the first cotangent cohomology of all quadratic monomial ideals.
More precisely, we describe a set of unobstructed first order deformations that generate $T^1(R/I)$ as an $R/I$-module. These deformations correspond to $\ZZ$-graded homomorphism in $\HomI$.
It is quite interesting that all the separations of the ideal $I$ appear among these generating elements of $T^1(R/I)$.
Having such deformations in hand it is easy to compute a basis for each $\ZZ$-graded component of $\HomI$.
This also enables us to give another characterization of rigid edge ideals of graphs (see Theorem \ref{thm-mainrigidity}).

Section \ref{Rigidity} investigates the rigidity of quadratic monomial ideals. We provide a characterization for rigidity of such ideals.

In Section \ref{T2} we give a criterion for vanishing of the second cotangent cohomology module.
The second cotangent cohomology module is an obstruction space and only its vanishing is of importance.
Characterization of quadratic monomial ideals for which the second cotangent cohomology vanishes seems to be difficult in general. However when $G$ is a simple graph with no 3-cycles then there is a nice characterization for vanishing of the second cotangent cohomology module (see Theorem \ref{thmMainT2}).
We also show that if the graph $G$ does not have any induced 3 or 4 cycles then the second cotangent cohomology module vanishes.


{\it Acknowledgement.} The author would like to thank Gunnar Fl{\o}ystad for his valuable comments and suggestions. 

\section{Deformations of edge ideals of graphs}
\label{Sec-defo}

Let $G=(V(G),E(G))$ be a finite graph. Let $R=\kk[G]$ be a polynomial ring with vertices of graph $G$ as indeterminates.
The {\it edge ideal} $I(G)$ of $G$ is a quadratic monomial ideal in $R$ generated by monomials $ab$ where $\{a,b\}$ is an edge of $G$.
Throughout $ab$ denotes both an edge $\{a,b\}$ of $G$ and also the monomial assigned to this edge in $I(G)$.
We define the {\it underlying simple graph} of a graph $G$ simply as a graph we obtain from $G$ by removing all loops and substituting multiple edges with only one edge.
The edge ideal of a graph is usually defined for simple graphs but we do not need our graphs to be simple until the last section.
Since multiple edges do not change the edge ideal we may assume that $G$ has no multiple edges. Furthermore since isolated vertices only change the ambient polynomial ring and they do not change the edge ideal we also assume that $G$ has no isolated vertices.

If $G$ is a simple graph then $I(G)$ is a square-free monomial ideal and it coincides with the Stanley-Reisner ideal of the clique complex of the complementary graph of $G$ (see Section 1.5 in \cite{HeHiMon}).


Let $e$ be an edge of $G$. We call an edge $e'$ distinct from $e$ an {\it adjacent edge} to $e$ when $e$ and $e'$ have a vertex in common.
Recall that a {\it leaf vertex} or a {\it free vertex} is a vertex of degree 1. Following \cite{ABHL}, an edge of graph $G$ is called a {\it leaf} if it has a leaf vertex and it is called a {\it branch} if it is connected to a leaf other than itself. 
For distinct vertices $a$ and $b$ of $G$, we call an edge $ab$ of $G$ an {\it isolated edge} when $a$ and $b$ are leaf vertices. We also call a loop on vertex $a$ an {\it isolated loop} provided that $a$ is a vertex of degree 2.
For a subset $H$ of the vertex set of a graph $G$ the {\it induced subgraph} of $G$ on $H$ is defined to be the subgraph of $G$ on vertex set $H\subseteq V(G)$ and edge set consisting of exactly those edges of $G$ connecting pairs of vertices in $H$.

For a monomial $m$ in the polynomial ring $R$ we denote the largest square-free monomial that divides $m$ by $\sqrt{m}$.
For a polynomial $f \in R$ we denote the set of monomials that appear with nonzero coefficients in $f$ with $\Mon(f)$. For example if $f=2x^2y - 3xyz$ then $\Mon(f) = \{x^2y, xyz\}$ and $\sqrt{x^2y} = xy$.

\subsection{Polarization of edge ideals of graphs}
\label{Polarization}

Let $I$ be a monomial ideal in a polynomial ring $R=\kk[X]$ where $X=\{x_1,\ldots,x_n\}$ is a set of indeterminates.
The polarization of $I$ is a square-free monomial ideal assigned to $I$ in a larger polynomial ring as follows.
Let $G(I)=\{g_1,\ldots,g_r\}$ be the set of minimal generators of $I$. 
For each $i=1,\ldots,n$ let $e_i$ be the highest power of $x_i$ among the elements of $G(I)$.
We define a new polynomial ring $S=R[y_{i,j}|i=1,\ldots,n;j=2,\ldots,e_i]$.
Now for $g\in G(I)$ if $g=x_1^{a_1} \cdots x_n^{a_n}$ then define
\[
\tilde{g} = x_1^{\min\{a_1,1\}} y_{1,2}\cdots y_{1,a_1} x_2^{\min\{a_2,1\}} y_{2,2}\cdots y_{2,a_2} \cdots x_n^{\min\{a_n,1\}} y_{n,2}\cdots y_{n,a_n}.
\]

We call the ideal $J$ defined by $\tilde{g_1},\ldots,\tilde{g_n}$ in $S$ the {\it polarization of $I$}.


Let $G=(V(G),E(G))$ be a graph and $I(G)$ be its edge ideal. If $G$ has loops on vertices $x_1,\ldots,x_m$ then define a new graph $\tilde{G}$ over the vertex set $V(\tilde{G})=V(G)\cup\{y_1,\ldots,y_m\}$ with edge set
\[E(\tilde{G})= \big( E(G)- \cup_{i=1}^m \big\{ \{x_i,x_i\} \big\} \big) \bigcup \big( \cup_{i=1}^m \big\{ \{x_i,y_i\} \big\} \big).\]
It follows that $I(\tilde{G})$ is the polarization of $I(G)$.
Therefore in case of edge ideal of graphs the polarization is the edge ideal of a graph constructed by removing all the loops and adding leaves instead.

Polarization is a special case of a more general process called separation.

\subsection{Separations of edge ideals of graphs}
\label{Separation}

Let $I$ be a monomial ideal in the polynomial ring $R=\kk[X]$. Let $x\in X$ be an indeterminate of $R$ and let $y$ be an indeterminate over $R$.
Following the definition in \cite{FGH} a monomial ideal $J$ in $S=R[y]$ is a called a {\it separation} of $I$ at the variable $x$ if 
\begin{enumerate}
\item $I$ is the image of $J$ under the $\kk$-algebra map $S \to R$ sending $y$ to $x$ and any other variable of $S$ to itself,
\item $x$ and $y$ occur in some minimal generators of $J$ and
\item $y-x$ is a regular element of the quotient ring $S/J$.
\end{enumerate}
We shall call a succession of separations also a {\it separation}.
We call $x$ a {\it separating variable}.
The ideal $I$ is called {\it separable} if it admits a separation, otherwise it is called {\it inseparable}.

For a vertex $x$ let $N(x) = \{y~|~\{x,y\} \in E(G)\}$ be the neighborhood of $x$.
Note that $x\in N(x)$ if and only if $G$ has a loop on $x$.

\begin{definition}
Let $G$ be a graph. We call a vertex $v$ a {\it separating vertex} when either of the following conditions hold
\begin{enumerate}
\item the neighborhood of $v$ can be divided into two nonempty disjoint subsets $A,B$ such that any two vertices of $A$ and $B$ are adjacent. We call the pair $(A,B)$ a {\it separation pair} of $v$;
\item there is an isolated loop on vertex $v$. In this case $(\emptyset,\{v\})$ is called a {\it separation pair} of $v$.
\end{enumerate}
\end{definition}

Let $G$ be a graph with a separating vertex $v$. We construct another graph from $G$ in the following way. First we add a new vertex $v'$ to $G$. We also remove any edge between vertices in $B$ and $v$ and then we connect $v'$ to any vertex in $B$. We call the new graph $H$, a {\it separation} of $G$ at vertex $v$.

\medskip
It is not hard to show that any separation pair gives a separation of the edge ideal of $G$.
Let $v$ be a separating vertex of $G$ with separation pair $(A,B)$.
Let $R=\kk[G]$ and $S=R[v']$.
There is an algebra homomorphism $S \to R$, sending $v,v'$ to $v$ and any other variable to itself.
Let $H$ be the separation of $G$ with respect to the vertex $v$ and separation pair $(A,B)$. Evidently, the edge ideal $J$ of $H$ maps onto the edge ideal $I$ of $G$ under the map $S \to R$ above. The condition $(2)$ of a separation is also satisfied. Suppose $f(v-v')=0$ in $S/J$ for a polynomial $f$. By \cite[Lemma 7.1]{FGH}, for any monomial $m$ in $f$ we have $mv=mv'=0$ in $S/J$. This shows that either $m\in J$ or for some minimal generators $g_1$ and $g_2$ of $J$ and polynomials $f_1$ and $f_2$, $mv=f_1g_1$ and $mv'=f_2g_2$. It follows that $g_1=va$ and $g_2=v'b$ for vertices $a \in A$ and $b \in B$. But this shows that $ab$ divides $m$ and since $(A,B)$ is a separation pair, $m$ belongs to $J$. Hence $f$ belongs to $J$.

\medskip
The following proposition shows that every separation of an edge ideal is constructed this way. Informally, the separation of an edge ideal is the same as the edge ideal of the separation graph.

\begin{proposition}
Let $G$ be a graph. The edge ideal $I$ of $G$ has a separation at the variable $v$ if and only if $v$ is a separating vertex.
\end{proposition}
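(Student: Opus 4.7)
The forward direction is established in the paragraph immediately above, so what remains is the converse: given a separation $J \subseteq S = R[v']$ of $I = I(G)$ at $v$, I will produce a separation pair for $v$.

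My first step is to analyze how minimal generators of $J$ lift those of $I$. Since the map $S \to R$ sending $v' \mapsto v$ preserves degrees, $J$ is generated in degree two. Generators of $I$ not divisible by $v$ lift uniquely to themselves, while each edge generator $vu$ with $u \neq v$ admits the two lifts $vu, v'u$, and the loop generator $v^2$ (if present) admits the three lifts $v^2, vv', v'^2$. The key lemma is that exactly one lift of each $v$-divisible generator of $I$ lies in $J$: if two coexisted, their difference would equal $v - v'$ times a linear form ($u$, $v$, $v'$, or $v + v'$), and regularity of $v - v'$ on $S/J$ combined with the absence of linear generators in $J$ would force that linear form into $J$, a contradiction.

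The second step is to build the separation pair. Set $A_0 = \{u \in N(v) \setminus \{v\} : vu \in J\}$ and $B_0 = \{u \in N(v) \setminus \{v\} : v'u \in J\}$, so that $N(v) \setminus \{v\} = A_0 \sqcup B_0$. If $v$ has no loop, take $(A, B) = (A_0, B_0)$; condition (2) of separation forces $v$ (respectively $v'$) to appear in some generator of $J$, which forces $A_0$ (respectively $B_0$) to be nonempty. If $v$ has a loop, I augment by placing $v$ itself on one side according to the unique lift of $v^2$: add $v$ to $A$ if $v^2 \in J$, add $v$ to $B$ if $v'^2 \in J$, and if $vv' \in J$ place $v$ on whichever side is otherwise empty. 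The sole case in which this fails to produce two nonempty sets is an isolated loop with $vv' \in J$, where $v$ is a separating vertex via case (2) of the definition with pair $(\emptyset, \{v\})$.

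The third step is to verify that $(A, B)$ satisfies the separation-pair property, namely that every $a \in A$ and $b \in B$ are adjacent in $G$. If both are distinct from $v$, multiplying $va \in J$ by $b$ and $v'b \in J$ by $a$ and subtracting gives $ab(v - v') \in J$; regularity yields $ab \in J$, and since $ab$ involves no $v'$ it maps to itself under $S \to R$, so $ab \in I$ and therefore $\{a, b\} \in E(G)$. If one of $a, b$ equals $v$---possible only in the loop case---then by construction the opposite vertex lies in $N(v) \setminus \{v\}$, so adjacency to $v$ is immediate. The main obstacle throughout is the bookkeeping for the loop case, particularly deciding which side to assign $v$ when $vv' \in J$ and isolating the exceptional isolated-loop scenario; the regularity computation $ab(v - v') \in J \Rightarrow ab \in J$ is the uniform engine of the proof.
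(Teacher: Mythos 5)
Your proof is correct and takes essentially the same route as the paper's: you partition $N(v)$ according to whether a neighbor attaches to $v$ or to $v'$ in $J$, and you use regularity of $v-v'$ both to show the two sides are disjoint (no generator has two lifts in $J$) and to show cross-adjacency via $ab(v-v')\in J \Rightarrow ab\in J$. The only difference is that you spell out the loop and isolated-loop cases explicitly, which the paper leaves implicit in its choice $A=N(v)\backslash\{v'\}$, $B=N(v')$.
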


\begin{proof}
The construction above shows that a separating vertex $v$ gives a separation at the variable $v$. Conversely, Suppose $I$ has a separation $J$ at variable $v$. Let $H$ be the graph of $J$. Suppose $v,v'$ are the two variables that map to $v$.
Let $A=N(v)\backslash \{v'\}$ and $B=N(v')$.
$A$ and $B$ are disjoint, otherwise for $a \in A\cap B$, $J$ contains minimal generators $av,av'$ which can not happen since $v-v'$ is regular over $J$.
Note that $A\cup B$ equals the neighborhood of $v$ in $G$. If elements $a \in A$ and $b\in B$ are not adjacent in $G$ then they are neither adjacent in $H$. But then $ab(v-v')$ belongs to $J$ contradicting the fact that $v-v'$ is regular. Thus $(A,B)$ is a separation pair of vertex $v$. Furthermore it is not hard to show that the separation graph that we get at the separating vertex $v$ and separation pair $(A,B)$ by the construction above, actually gives the same separation ideal $J$.  
\end{proof}

Let $N(x)$ be the neighborhood of $x$.
Let $G_{N(x)}$ be the induced subgraph of $G$ on the vertex set $N(x)$.
At any vertex $x$ we define a simple graph $\overline{N}(x)$ as the complementary graph of the underlying simple graph of $G_{N(x)}$. If $G$ is not a simple graph then $G$ is obviously separable.
If $G$ is a simple graph then the vertex $x$ of $G$ is a separating vertex if and only if $\overline{N}(x)$ is disconnected.
It follows that a graph $G$ is inseparable if and only if it is simple and for all $x\in V(G)$, $\Nb(x)$ is connected
(see \cite[Theorem 3.1]{ABHL}).

\medskip
Let $I$ be an ideal in a polynomial ring $R$. Let $J \in R[y]$ be a separation of $I$ at variable $x$.
We apply the coordinate change $y \rightsquigarrow x+t$ and we get an ideal $\tilde{I} \in R[t]$ such that $R[y]/J \cong R[t]/\tilde{I}$.
Since $y$ is a nonzero divisor on $R[y]/J$, $t$ is a nonzero divisor on $R[t]/\tilde{I}$. Hence $R[t]/\tilde{I}$ is flat over $\kk[t]$.
Furthermore,
\[
\frac{R[t]}{\tilde{I}} \otimes_{\kk[t]} \frac{\kk[t]}{(t)} 
= \frac{R[t]}{\tilde{I}} \otimes_{\R[t]} R[t] \otimes_{\kk[t]} \frac{\kk[t]}{(t)}
= \frac{R[t]}{\tilde{I}} \otimes_{\R[t]} \frac{R[t]}{(t)} = \frac{R}{I}
\]
This means that any separation $J$ of $I$ at a variable $x$ is a flat deformation of $I$ over the polynomial ring $\kk[t]$.

\subsection{First order deformations}
\label{FirstOrder}

General references for deformation theory are \cite{DefoHar} and \cite{Ser}.

Let $I$ be an ideal in a $\kk$-algebra $R$. Let $B$ be another $\kk$-algebra with a distinguished $\kk$-point $b\in \spec B$ corresponding to a morphism $B\to \kk$. A {\it deformation} of $I$ over $B$ is an ideal $J$ in $R\otimes_\kk B$ satisfying the following
\begin{enumerate}
\item $(R\otimes_\kk B)/J$ is flat over $B$,
\item the natural map $R\otimes_\kk B \to R$ induces an isomorphism $(R\otimes_\kk B)/J \otimes_B \kk \to R/I$.
\end{enumerate}

If $B$ is a local Artinian $\kk$-algebra such that $B/m_B \cong \kk$ then a deformation over $B$ is called an {\it infinitesimal deformation}. A deformation over the local Artinian ring $\kk[\epsilon] = \kk[t]/(t^2)$ is call a {\it first order deformation} of $R/I$.
Suppose $J\subseteq R[\epsilon]$ is an ideal such that $R[\epsilon]/J \otimes_{\kk[\epsilon]} \kk[t]/(t^2) \cong R/I$. If $I = (f_1,\ldots,f_r)$ then $J=(f_1+g_1\epsilon,\ldots,f_r+g_r\epsilon)$ and $R[\epsilon]/J$ is flat over $\kk[\epsilon]$ if and only if the map sending $f_i\mapsto g_i + I$ defines a well-defined $R$-module homomorphism $I\to R/I$. Therefore the set of first order deformations of $R/I$ are in one-to-one correspondence with elements of $\Hom_R(I,R/I)$.

\begin{remark}
Let $I$ be an ideal in a polynomial ring $R$ and let $J$ be a separation of $I$ at a variable $x$.
Suppose $I=(f_1,\ldots,f_r)$ and $J=(g_1,\ldots,g_r)$ such that under the map $R[y] \to R$, $g_i$ maps to $f_i$ for $i=1,\ldots,r$.
Suppose $y$ divides $g_1,\ldots,g_k$ and no other generator of $J$ is divisible by $y$.
The assignment $f_i \mapsto f_i/x$ for $i=1,\ldots,k$ and $f_i\mapsto 0$ for $i=k+1,\ldots,r$ defines a homomorphism in $\HomI$ which corresponds to the deformation
\[
(f_1+(f_1/x) t,\ldots,f_k + (f_k/x) t, f_{k+1},\ldots,f_r)
\]
of $I$.
Note that if we substitute $t$ by $y-x$ we get the ideal $J$.
\end{remark}

\subsection{Cotangent cohomology}
Let $I=(f_1,\ldots,f_r)$ be an ideal in a polynomial ring $R$ and let $A= R/I$.
Let $\Der_\kk(R)$ be the module of derivations of $R$. If $R=\kk[x_1,\ldots,x_n]$ then $\Der_\kk(R)$ is a free $R$-module generated by derivations $\frac{\partial}{\partial x_i}$ for $i=1,\ldots,n$. There is a map
\[
\delta^\ast : \Der_\kk(R) \longrightarrow \Hom_R(I,R/I)
\]
which sends $\partial$ to the homomorphism sending $f_i\mapsto \partial f_i+I$ for $i=1\ldots,r$.
We usually denote the image of a derivation $\partial$ under the map $\delta^\ast$ again by $\partial$. 
The cokernel of the map $\delta^\ast$ is called the {\it first cotangent cohomology module} of $A$ and it is denoted by $T^1(A)$.
A homomorphisms in $\HomI$ is called a {\it trivial first order} deformation if it lies in the image of $\delta^\ast$ and it is called a {\it nontrivial first order} deformation otherwise.
Therefore $T^1(A)$ characterizes all the nontrivial first order deformations of $A$.
A ring $A=R/I$ as well as the ideal $I$ is called {\it rigid} if $T^1(A)$ vanishes.
Following \cite{FGH} we call a graph $G$ {\it algebraically rigid} (or simply {\it rigid}) if its edge ideal $I(G)$ is rigid.

Now let
\[
0 \longrightarrow K \longrightarrow R^m \stackrel{j}{\longrightarrow} R \longrightarrow A \longrightarrow 0
\]
be an exact sequence presenting $A$ as an $R$-module.
Let $\epsilon_1,\ldots,\epsilon_m$ be a basis for $R^m$ and let $K_0$ be the submodule of $K$ generated by relations $j(\epsilon_i)\epsilon_j - j(\epsilon_j)\epsilon_i$ for all $i\neq j$, $1\leq i,j \leq m$. These relations are called the Koszul relations.
Note that $K/K_0$ is an $A$-module.  The cokernel of the map
\[
\Phi: \Hom_R(R^m,A) \longrightarrow \Hom_A(K/K_0,A)
\]
is called the {\it second cotangent cohomology module} of $A$ and is denoted by $T^2(A)$. 

The modules $T^i(A)$ for $i=1,2$, are originally defined as cohomology of
$\Hom_A(L_\bullet, A)$ where $L_\bullet$ is a 3-term complex of $A$-modules called the cotangent complex. This definition of cotangent cohomology modules as cohomology of a complex is equivalent to the definition we gave above.

\section{First cotangent cohomology}
\label{T1}

Let $I$ be a monomial ideal generated in degree $d$ in a polynomial ring $R$.
Any $R$-linear map $\phi:I \to R/I$ gives an element $\varphi$ of $\Hom_\kk(I_d,R/I)$ by definition. Conversely, any $\kk$-linear map $\varphi:I_d \to R/I$ which satisfies the relations of $I$ algebraically extends to a well-defined $R$-linear map $\phi:I\to R/I$. Therefore there is a one-to-one correspondence
$$\Hom_R(I,R/I)) \cong \{\phi\in \Hom_\kk(I_d,R/I)| \phi \text{ satisfies the relations of }I\}.$$

Let $I$ be the edge ideal of a graph $G$. Let $R=\kk[G]$.
Two edges $ab$ and $ab'$ with a vertex in common define a relation $b'(ab) - b(ab')$ of $I$. Moreover if two distinct edges $ab$ and $a'b'$ do not have a common vertex then we have a Koszul relation $a'b'(ab)-ab(a'b')$ of $I$ and these relations generate all of the relations of the ideal $I$. 

\medskip
We define two types of homomorphisms in $\HomI$ that define a generating set for $\HomI$ as well as a generating set for $T^1(R/I)$.

\medskip
{\bf Type I}.
Let $ab$ be an edge of $G$. The vertices $a$ and $b$ of edge $ab$ are not necessarily distinct.
Let $\Lambda^{ab}$ be the set of all vertices of $G$ that are adjacent to $a$ or $b$.
More precisely, let $\Lambda^{ab} = (N(a)\backslash \{b\}) \cup (N(b)\backslash \{a\})$. When the edge $ab$ is fixed we usually denote $\Lambda^{ab}$ by $\Lambda$.
We also assume that $a$ (resp. $b$) belongs to $\Lambda$ if and only if there is a loop on $a$ (resp. $b$).

For any $g \in \Lambda$ let $\Lambda_g$ be the set all vertices adjacent to $g$ other than $a$ and $b$, i.e. $\Lambda_g = N(g)\backslash \{a,b\}$.
Let $|\Lambda| = d$. Any ordered $d$-tuple $(x_1,\ldots,x_d)$ in $\prod_{g\in \Lambda} \Lambda_g$ gives a monomial $x_1\cdots x_d$.
By abuse of notation, from now on $\prod_{g\in \Lambda} \Lambda_g$ denotes the set of such monomials instead of the $d$-tuples $(x_1,\ldots,x_d)$.

Now define $\Lambda_{ab}$ as 
\[\Lambda_{ab} = \{ \sqrt{m}~|~ m \in \prod_{g\in \Lambda} \Lambda_g\}.\]

If $\Lambda=\emptyset$ that is when $ab$ is an isolated edge or an isolated loop then $\Lambda_{ab} = \{1\}$. It is worth mentioning that for any $\lambda \in \Lambda_{ab}$ and any $g\in \Lambda$, there exists a vertex $x$ adjacent to $g$ such that $x|\lambda$.

Now for $\lambda \in \Lambda_{ab}$, we define a linear map
\[
\phi_{ab}^\lambda: I_2 \to R/I
\]
which sends $ab$ to $\lambda$ and any other minimal generator of $I$ to zero.

\begin{lemma} \label{lemTypeI}
The map $\phi_{ab}^\lambda$ algebraically extends to a well-defined homomorphism in $\HomI$. Furthermore, if $\phi_{ab}^\lambda$ is nonzero then it corresponds to a nontrivial deformation.
\end{lemma}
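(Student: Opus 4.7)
The plan is to establish both claims by (i) unpacking the criterion recalled at the start of the section---that a $\kk$-linear map $I_2 \to R/I$ extends to an element of $\HomI$ iff it respects the syzygies of $I$---and (ii) observing that the standard monomial $\lambda$ cannot represent an element of the ideal $(a,b)\,R/I$ unless it already lies in $I$.

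For well-definedness, the syzygies of $I$ fall into two families: Koszul relations $g_j \epsilon_i - g_i \epsilon_j$ associated to pairs of edges sharing no vertex, and linear syzygies coming from pairs of edges sharing a vertex (with their loop analogues). Koszul relations are automatically satisfied, since $g_i\phi_{ab}^\lambda(g_j)$ and $g_j\phi_{ab}^\lambda(g_i)$ both already lie in $I$. Each linear syzygy that involves $ab$ has the form $g\,\epsilon_{ab} - b\,\epsilon_{ag} = 0$ with $g\in N(a)\setminus\{b\}$, or the analogous relation with the roles of $a$ and $b$ swapped, or one of the loop variants $b\,\epsilon_{aa} - a\,\epsilon_{ab}$ when $a$ carries a loop. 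Because $\phi_{ab}^\lambda$ vanishes on every minimal generator other than $ab$, each such relation collapses to the single requirement $g\lambda \in I$ for all $g\in\Lambda$. This is built into the construction of $\Lambda_{ab}$: writing $\lambda = \sqrt{\prod_{h\in \Lambda} x_h}$ with $x_h \in \Lambda_h = N(h)\setminus\{a,b\}$, the vertex $x_g$ is adjacent to $g$ and divides $\lambda$, so the generator $g x_g$ of $I$ divides $g\lambda$, giving $g\lambda \in I$.

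For nontriviality, assume $\phi_{ab}^\lambda \neq 0$, which is equivalent to $\lambda\notin I$. A derivation $\partial = \sum_i f_i\,\partial/\partial x_i$ satisfies $\partial(ab) = af_b + bf_a$ (or $2af_a$ in the loop case $a=b$), so $\delta^\ast(\partial)(ab) \in (a,b)\,R/I$. If $\phi_{ab}^\lambda = \delta^\ast(\partial)$ we would obtain $\lambda \in (a,b)R + I$. But every factor of $\lambda$ lies in some $\Lambda_g = N(g)\setminus\{a,b\}$, so $a\nmid\lambda$ and $b\nmid\lambda$. Suppose $\lambda = a g_1 + b g_2 + h$ with $h\in I$; each monomial of $ag_1$ and $bg_2$ is divisible by $a$ or $b$ respectively, so the monomial $\lambda$ itself must appear in $h$, forcing $\lambda\in I$ since $I$ is a monomial ideal---contradicting $\lambda\notin I$.

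The main piece of bookkeeping I expect to handle carefully is the behavior at loops: the convention ``$a\in\Lambda$ iff $a$ is looped'' must be compatible with both steps. For well-definedness it fits in because the linear syzygies involving a loop generator $a^2$ again reduce to $a\lambda\in I$, which is supplied by the $\Lambda_a$-factor of $\lambda$. For nontriviality it fits in because $\Lambda_a = N(a)\setminus\{a,b\}$ still excludes $a$, so the support of $\lambda$ remains disjoint from $\{a,b\}$ even if $a$ or $b$ is looped. The characteristic-$2$ loop subcase is only easier: $\delta^\ast(\partial)(aa) = 2af_a = 0$, so any nonzero $\phi_{aa}^\lambda$ is automatically nontrivial.
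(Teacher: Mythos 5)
Your proof is correct and takes essentially the same route as the paper: Koszul relations are automatic, the linear syzygies involving $ab$ reduce to the condition $g\lambda\in I$ for all $g\in\Lambda$ (which is exactly what the construction of $\Lambda_{ab}$ supplies), and nontriviality follows because any derivation sends $ab$ into $(a,b)\,R/I$ while $a\nmid\lambda$ and $b\nmid\lambda$. Your explicit monomial-ideal argument for $\lambda\notin(a,b)R+I$ and your bookkeeping at loops merely flesh out what the paper states in one sentence.
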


\begin{proof}
We show that $\phi=\phi_{ab}^\lambda$ satisfies the relations of $I$. Let $a'b'$ be a generator of $I$ with no common vertex with edge $ab$. It is trivial that the map $\phi$ satisfies the relation $a'b'(ab) - ab(a'b')$, i.e. $a'b' \phi(ab) - ab \phi(a'b') =0$ in $R/I$.
Let $ab'$ be another generator of $I$. By definition of $\Lambda_{ab}$, there exists some $x$ adjacent to $b'$, such that $x|\lambda$. Therefore $\phi$ satisfies the relation $b'(ab) - b(ab')$, since $b'\phi(ab) - b\phi(ab') = b' \lambda = 0$ in $R/I$.
The argument in the case of a generator of form $a'b$ and relation $a'(ab)-a(a'b)$ is similar.

Since $a$ or $b$ does not divide $\lambda$, the homomorphism $\phi$ can not lie in the submodule of $\HomI$ generated by derivations. Hence it corresponds to a nontrivial first order deformation.
\end{proof}

\begin{definition}
For any $\lambda\in \Lambda_{ab}$, we call $\phi_{ab}^\lambda$ a {\it type I deformation associated with the edge $ab$}.
When there is no confusion we denote $\phi^\lambda_{ab}$ simply by $ab\mapsto \lambda$.
\end{definition}

\medskip
{\bf Type II.}
Let $a\in V(G)$ be a vertex.
Let $N(a)$ be the neighborhood of $a$. We denote the complementary graph of the underlying simple graph of $G_{N(a)}$ by $\overline{N}(a)$.
Let $L$ be a nonempty subset of the vertex set of $\Nb(a)$. We usually denote the induced subgraph of $\Nb(a)$ on the vertex set $L$ again by $L$.
Let $\Gamma(L)$ be the set of all vertices in $\Nb(a)$ which are adjacent to some vertex of $L$ but does not belong to $L$. When the subgraph $L$ is fixed we simply denote $\Gamma(L)$ by $\Gamma$.
For any $g \in \Gamma$, let $\Gamma_g$ be the set of vertices adjacent to $g$ other than $a$.
Let
\[ \Gamma_{a,L} = \{\sqrt{m} ~|~ m \in
\prod_{g\in \Gamma} \Gamma_g\}.
\]
Now define a linear map 
\begin{equation*}
\phi_{a,L}^\lambda  : I_2 \to R/I
\end{equation*}
by
\begin{equation*}
\phi_{a,L}^\lambda(e)  = 
\begin{cases}
\lambda x & e=ax \text{ and } x\in L\\
0 & \text{otherwise}.
\end{cases}
\end{equation*}

\begin{lemma} \label{lemTypeII}
The map $\phi_{a,L}^\lambda$ algebraically extends to a well-defined homomorphism in $\HomI$.
\end{lemma}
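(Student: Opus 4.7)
The plan is to follow the strategy used in the proof of Lemma \ref{lemTypeI}: verify that $\phi = \phi_{a,L}^\lambda$ respects every minimal syzygy of $I = I(G)$, i.e., for each relation $\sum r_i e_i = 0$ among the quadratic generators $e_i$ the image $\sum r_i \phi(e_i)$ lies in $I$. These syzygies split into the shared-vertex relations $w(uv) - v(uw)$ for edges $uv, uw \in E(G)$ sharing a vertex $u$, and the Koszul relations $e' \cdot e - e \cdot e'$ for pairs of edges with no common vertex.

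The Koszul relations cause no trouble: either neither edge is of the form $ax$ with $x \in L$ (both $\phi$-values vanish), or exactly one, say $ax$, is of this form (the remaining term $e' \cdot \lambda x$ lies in $I$ since $e' \in I$). For shared-vertex relations I would enumerate cases on the two edges $e_1, e_2$. If both are of the form $ax, ay$ with $x, y \in L$, the relation $y(ax) - x(ay)$ is sent to $y\lambda x - x\lambda y = 0$. If one edge is $ax$ with $x \in L$ and the other is $xz$ with $z \neq a$, the relation $z(ax) - a(xz)$ maps to $z\lambda x$, which lies in $I$ because $xz \in I$ divides it. When neither of the two edges is of the form $ay$ with $y \in L$, the relation maps to $0$.

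The only remaining case, and the crux of the lemma, is a pair of edges $ax, ay$ sharing the vertex $a$ with $x \in L$ and $y \in N(a) \setminus L$; the relation $y(ax) - x(ay)$ is sent to $y\lambda x$, and one must show $y\lambda x \in I$. I would split on whether $\{x,y\} \in E(G)$. If so, the edge $xy$ divides $y\lambda x$ and we are done. If not, then $x$ and $y$ are non-adjacent in $G_{N(a)}$ and hence adjacent in $\overline{N}(a)$; combined with $y \notin L$ this forces $y \in \Gamma(L)$. The construction of $\Gamma_{a,L}$ as the set of squarefree supports of products drawn one element per factor from $\prod_{g \in \Gamma} \Gamma_g$ then produces some $z \in \Gamma_y \subseteq N_G(y) \setminus \{a\}$ with $z \mid \lambda$, so the edge $yz$ of $G$ witnesses $y\lambda x \in I$.

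The main obstacle is precisely this last dichotomy; everything else is routine bookkeeping. The multiplicative definition of $\Gamma_{a,L}$ is engineered exactly so that every ``problematic'' $y \in \Gamma$ contributes a witness edge $yz$ to $\lambda$, guaranteeing $y\lambda x \in I$. The presence of loops (say when $a \in L$ or $ax = aa$) requires only a small bookkeeping adaptation of the same case analysis, with $a = b$ or $a = x$ interpreted appropriately.
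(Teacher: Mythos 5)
Your proposal is correct and follows essentially the same route as the paper: check the Koszul and shared-vertex syzygies case by case, with the crux being that for $x\in L$ and $y\in N(a)\setminus L$ either $xy\in I$ directly or $y\in\Gamma(L)$, in which case the definition of $\Gamma_{a,L}$ supplies a neighbor of $y$ dividing $\lambda$. Your dichotomy on whether $xy\in E(G)$ is just a reorganization of the paper's split $x'\in\Gamma$ versus $x'\notin L\cup\Gamma(L)$, so the two arguments coincide.
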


\begin{proof}
We show that $\phi=\phi^\lambda_{a,L}$ satisfies the relations of $I$.
Obviously, $\phi$ satisfies the Koszul relations.
For $x\in L$, let $ax$ and $bx$ be two generators of $I$.
The relation $b(ax)-a(bx)$ implies $b\phi(ax)-a\phi(bx) = (bx)\lambda=0$ in $R/I$.
For $ax,ax'$ and the relation $x'(ax)-x(ax')$ we have
\begin{enumerate}
\item if $x,x'\in L$ then $x'\phi(ax)-x\phi(ax') = x'\lambda x - x \lambda x' = 0$;
\item if $x\in L$ and $x'\in \Gamma$ then $x'\phi(ax)-x\phi(ax') = x'\lambda x - 0\in I$ by definition of $\Gamma_{a,L}$;
\item if $x\in L$ and $x' \notin (L\cup\Gamma(L))$ then $x'\phi(ax)-x\phi(ax')= x' \lambda x \in I$ since $xx'$ is a generator of $I$.
\end{enumerate}
This completes the proof that $\phi$ is a well-defined $R$-linear map.
\end{proof}

\begin{definition}
For any $\lambda\in \Gamma_{a,L}$ we call $\phi_{a,L}^\lambda$ a {\it type II deformation associated with the vertex $a$}.
\end{definition}

\begin{remark} \label{rmk-typeII}
Suppose $\Nb(a)$ is disconnected.
Let $L$ be a proper subgraph of $\Nb(a)$ which is a union of connected components of $\Nb(a)$.
Let $B$ be the vertex set of $L$ and $A=\V(\Nb(a)) \backslash B$.
Then $\Gamma(L)=\emptyset$ and $\phi_{a,L}^1$ is the homomorphism corresponding to the separation at vertex $a$ and separation pair $(A,B)$.

Suppose $G$ has a loop on vertex $a$.
Let $L_1=\{a\}$.
The induced subgraph of $\Nb(a)$ on $L_1$ is a connected component of $\Nb(a)$ and the homomorphism $\phi^1_{a,L_1}$ corresponds to separation at vertex $a$ and separation pair $(V(\Nb(a))\backslash \{a\},\{a\})$.
\begin{enumerate}
\item If $N(a)=\{a\}$ then the loop on $a$ is an isolated loop and $\phi^1_{a,L_1} = \frac{1}{2} \frac{\partial}{\partial a}$. In this case the only separation at $a$ is a trivial deformation.
\item If $N(a) \neq \{a\}$ then the separation $\phi^1_{a,L_1}$ is a nontrivial deformation. Suppose $\phi^1_{a,L_1} = \sum_{g\in V(G)} r_g \frac{\partial}{\partial g}$ is trivial. Since $\phi^1_{a,L_1}(a^2) = 2r_a a = a$, $r_a=\frac{1}{2}$. Now for $x\in N(a)$ distinct from $a$ we have $\phi^1_{a,L_1}(ax) = \frac{1}{2} x + r_x a =0$ which is a contradiction. This implies that $\phi^1_{a,L_1}$ corresponds to a nontrivial deformation.
If we choose $L_2 = V(\Nb(a))$ then $\Gamma(L_2)=\emptyset$ and $\phi_{a,L_1}^1 + \phi^1_{a,L_2} = \frac{\partial}{\partial a}$. Therefore the homomorphism $\phi^1_{a,L_2}$ also corresponds to a nontrivial deformation.
\end{enumerate}

Note that if $G$ does not have a loop on vertex $a$ and we choose $L$ equal to $V(\Nb(a))$ then $\phi^1_{a,L}= \frac{\partial}{\partial a}$.
\end{remark}



\begin{lemma} \label{TypeIInontrivial}
For a vertex $a$ and a nonempty subset $L\subseteq V(\Nb(a))$, let $\phi^{\lambda}_{a,L}$ be a nonzero type II deformation associated with vertex $a$. If $\phi^{\lambda}_{a,L}$ satisfies either of the following conditions
\begin{enumerate}
\item $G$ dos not have any loop on $a$ and $\phi^{\lambda}_{a,L} \neq \lambda \frac{\partial}{\partial a}$;
\item $G$ has a loop on $a$ and $\phi^{\lambda}_{a,L}$ is not equal to $\lambda \phi^1_{a,\{a\}}$ nor $\lambda \phi^1_{a,V(\Nb(a))}$.
\end{enumerate}
then $\phi^{\lambda}_{a,L}$ corresponds to a nontrivial deformation.
\end{lemma}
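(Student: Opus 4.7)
The plan is to establish the contrapositive: if $\phi := \phi^{\lambda}_{a,L}$ corresponds to a trivial deformation, then $\phi$ coincides (as a map $I \to R/I$) with one of the listed exceptional maps. Triviality means $\phi = \sum_{g \in V(G)} r_g \frac{\partial}{\partial g}$ in $\HomI$ for some homogeneous $r_g \in R$ of degree $\deg\lambda$. Evaluating both sides on each edge of $G$ yields polynomial identities modulo $I$, and since $I$ is a monomial ideal, such an identity holds if and only if every monomial of the difference lies in $I$. The key combinatorial observation driving the whole argument is that $a \nmid \lambda$: indeed $\lambda = \sqrt{m}$ with $m \in \prod_{g \in \Gamma} \Gamma_g$, and every $\Gamma_g = N(g)\setminus \{a\}$ excludes $a$ by construction.

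First I would pin down the coefficient $c$ of the monomial $\lambda$ in the polynomial $r_a$. For each $x \in L$ with $x \neq a$, the relation $r_a x + r_x a \equiv \lambda x \pmod{I}$ shows that the coefficient of $\lambda x$ in the left-minus-right difference equals $c - 1$, because $a \nmid \lambda x$ prevents any contribution from $r_x a$. Whenever $\lambda x \notin I$, this forces $c = 1$. Symmetrically, for each $x' \in N(a) \setminus L$ with $x' \neq a$, the relation $r_a x' + r_{x'} a \in I$ shows that the coefficient of $\lambda x'$ in this polynomial equals $c$, so $\lambda x' \in I$ whenever $c \neq 0$.

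For part (1), the absence of a loop on $a$ means $a \notin N(a)$, hence every $x \in L$ satisfies $x \neq a$. Since $\phi \neq 0$, there is some $x \in L$ with $\lambda x \notin I$, so the previous paragraph gives $c = 1$ and then $\lambda x' \in I$ for every $x' \in N(a) \setminus L$. Consequently $\phi$ and $\lambda \frac{\partial}{\partial a}$ agree on every generator of $I$: on $ax$ with $x \in L$ both yield $\lambda x$; on $ax'$ with $x' \in N(a) \setminus L$ both yield $0$ in $R/I$ since $\lambda x' \in I$; on edges not containing $a$ both vanish. Hence $\phi = \lambda \frac{\partial}{\partial a}$, as required.

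For part (2), the loop edge $a^2$ contributes the additional relation $2 r_a a \equiv \phi(a^2) \pmod I$. Examining the coefficient of $\lambda a$ yields $c = 1/2$ when $a \in L$ and $\lambda a \notin I$, or $c = 0$ when $a \notin L$ and $\lambda a \notin I$. Combined with the constraint $c = 1$ from Step one, a short case analysis on whether (a) some $x \in L \setminus \{a\}$ satisfies $\lambda x \notin I$, and (b) $\lambda a \in I$, shows that each configuration compatible with triviality forces either $\lambda x \in I$ for every $x \in L \setminus \{a\}$, in which case a direct check on generators gives $\phi = \lambda \phi^1_{a,\{a\}}$ in $\HomI$, or else $\lambda a \in I$ together with $\lambda x' \in I$ for every $x' \in N(a) \setminus L$, in which case $\phi = \lambda \phi^1_{a, V(\Nb(a))}$. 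The main obstacle will be this last case analysis, where one must simultaneously track the $I$-membership of $\lambda a$ and of the various $\lambda x$ and $\lambda x'$ and verify that each surviving possibility matches exactly one of the two exceptional maps, ruling out all remaining combinations as numerically inconsistent.
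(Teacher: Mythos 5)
Your proposal is correct and follows essentially the same route as the paper: both arguments hinge on extracting the coefficient of the monomial $\lambda$ in $r_a$ from the relations $\phi(ax)=r_ax+r_xa$ and exploiting $a\nmid\lambda$ to force $\lambda x'\in I$ for $x'\in N(a)\setminus L$ (the paper phrases this as a direct contradiction, you as the contrapositive identifying $\phi$ with the exceptional maps). Your explicit treatment of the loop case via the relation $2r_aa\equiv\phi(a^2)$ is in fact a welcome elaboration of the step the paper dismisses as ``a similar argument.''
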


\begin{proof}
Suppose $\phi$ corresponds to a trivial first order deformation and $\phi = \sum_{g \in V(G)} r_g \frac{\partial}{\partial g}$.
First consider the case where there is no loop on $a$.
In this case since $\phi$ is nonzero there exists some $x\in L$ such that $x\lambda$ is nonzero in $R/I$. For such vertex $x$ we have $\phi(ax) = r_ax+r_xa = \lambda x$ and since $a$ does not divide $\lambda$,
$r_a = \lambda + r$ such that $rx + r_x a = 0$ in $R/I$. Thus 
\[\phi = (\lambda+r) \frac{\partial}{\partial a} + \sum_{\substack{g \in V(G)\\g\neq a}} r_g \frac{\partial}{\partial g}.\]
Since $\phi \neq \lambda\frac{\partial}{\partial a}$, there exists some $y\in \Nb(a) - (L\cup \Gamma)$ such that $\lambda y$ is not in $I$. We have
\[
\phi(ay) = (\lambda+r) y + \sum_{\substack{g \in V(G)\\g\neq a}} r_g \frac{\partial}{\partial g} (ay) = \lambda y + r y + r_y a = 0.
\]
The term $\lambda y$ can not be canceled with a term in $r y$. It follows that it cancels with a term in $r_y a$ and $a|\lambda$ which is a contradiction. Therefore $\phi$ can not be written as a combination of derivations in this case.

Suppose there is a loop on $a$. Let $\psi_1$ be the homomorphism $\phi^1_{a,\{a\}}$ and let $\psi_2 = \phi^1_{a,V(\Nb(a))}$.
Since $\phi\neq \lambda \psi_1$, there is a vertex $x\in L$ distinct from $a$ such that $\lambda x\notin I$.
Furthermore from our assumption that $\phi\neq \lambda \psi_2$ it follows that there exists a vertex $y \in V(\Nb(a))\backslash (L\cup \Gamma)$ such that $\lambda y\notin I$.
Now a similar argument to the previous case shows that $\phi$ corresponds to a nontrivial deformation.
\end{proof}

\begin{theorem}
\label{thm-Hom-gens}
As $ab$ varies in the set of edges of $G$ and $a$ varies in the set of vertices of $G$, the homomorphisms $\phi_{ab}^\lambda$ for $\lambda\in\Lambda_{ab}$ alongside with the homomorphisms $\phi_{a,L}^\lambda$ for nonempty $L\subseteq V(\Nb(a))$ and $\lambda\in\Gamma_{a,L}$ define a generating set for $\HomI$.
\end{theorem}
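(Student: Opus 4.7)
My plan is to reduce to a $\ZZ^n$-homogeneous $\phi$ and argue by case analysis on the negative part of the multidegree, matching $\phi$ against a single Type I or a family of Type II deformations via the quadratic syzygies of $I$. Since $I$, $R$, and $R/I$ are $\ZZ^n$-graded and all Type I and Type II deformations are themselves homogeneous, it suffices to handle a fixed homogeneous $\phi$ of multidegree $\mathbf c$. Each graded piece $(R/I)_{\mathbf d}$ is at most one-dimensional, so for every edge $e$ one has $\phi(e) = c_e m_e$ for a uniquely determined monomial $m_e$ of multidegree $\deg e + \mathbf c$ (when nonzero mod $I$) and scalar $c_e \in \kk$.

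Let $S = \{v \in V(G) : \mathbf c_v < 0\}$. For an edge $e = ab$ to lie in the support of $\phi$, the existence of $m_e$ forces $\mathbf c_v \geq 0$ for $v \notin \{a,b\}$, while $m_e \notin I$ rules out $\mathbf c_a$ and $\mathbf c_b$ both being nonnegative (otherwise $ab \mid m_e$). Hence $S \subseteq \{a,b\}$ for every such $e$. Allowing $\mathbf c_v = -2$ at a loop vertex, one obtains three regimes: $S = \emptyset$, in which $\phi = 0$; $|S| = 2$ with $S = \{a,b\} \in E(G)$ and $\phi$ supported on the single edge $ab$; or $|S| = 1$ with $S = \{a\}$ and $\phi$ supported on edges through $a$.

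In the case $|S| = 2$, set $\lambda = \sqrt{m_{ab}}$. For each $g \in \Lambda$, the syzygy $g(ab) - b(ag) = 0$ (or its $b$-analogue) applied to $\phi$ yields $g c_{ab} m_{ab} \equiv c_{ag} b m_{ag} \pmod I$; since $c_{ag} = c_{bg} = 0$, this forces $g m_{ab} \in I$, producing a vertex $v \in \Lambda_g$ dividing $m_{ab}$, so $\lambda \in \Lambda_{ab}$ and $\phi = c_{ab}(m_{ab}/\lambda)\, \phi_{ab}^\lambda$. The loop variant with $\mathbf c_a = -2$ is handled identically using the loop-edge Type I $\phi_{aa}^\lambda$. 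In the case $|S| = 1$, for each $x \in N(a)$ with $c_{ax} \neq 0$ I write $m_{ax} = x \mu_x$ and partition such $x$'s into maximal subsets $L$ on which $\mu_x$ and $c_{ax}$ are constant (equal to $\mu_L$ and $c_L$ respectively). The syzygy $g(ax) - x(ag) = 0$ with $xg \notin E(G)$ forces $\mu_x = \mu_g$ and $c_{ax} = c_{ag}$ whenever both are nonzero, so this partition is coherent; when $c_{ag} = 0$ the same syzygy instead gives $g\mu_L \in I$ for every $g \in \Gamma(L)$, verifying $\sqrt{\mu_L} \in \Gamma_{a,L}$. The resulting expression is $\phi = \sum_L c_L(\mu_L/\sqrt{\mu_L})\, \phi_{a,L}^{\sqrt{\mu_L}}$.

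The principal difficulty is the $|S| = 1$ case, where one must carry out the partition of $\{x \in N(a) : c_{ax} \neq 0\}$ into subsets realizing valid Type II deformations and verify that no leftover contribution survives on edges outside the union of these $L$'s. Loops at $a$ require extra bookkeeping, since they permit $\mathbf c_a = -2$, place $a \in \Lambda$ and $a \in V(\Nb(a))$, and bring in the exceptional separations $\phi^1_{a,\{a\}}$ and $\phi^1_{a,V(\Nb(a))}$ from Remark \ref{rmk-typeII} in place of the derivation $\partial/\partial a$.
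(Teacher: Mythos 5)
Your argument is correct in substance, but it is organized quite differently from the paper's. The paper never invokes the $\ZZ^n$-grading in this proof: it fixes the submodule $N$ generated by the type I and type II maps and performs an ungraded, term-by-term reduction --- pick any monomial term $rm$ of $\phi(ab)$, kill it by subtracting a suitable multiple of $\phi_{ab}^\lambda$ (when $\gcd(m,ab)=1$, via Lemma \ref{lem-02}) or of $\phi_{a,L}^\lambda$ (when $b\mid m$, with $L$ chosen adaptively from the monomials occurring in the various $\phi(ax)$) --- and guarantees termination by checking that $\Mon(\phi(xy))$ never grows at any generator. You instead restrict to a multihomogeneous $\phi$, use that each graded piece of $R/I$ is at most one-dimensional to encode $\phi$ by scalars $c_e$, and classify by the negative support $S$ of the degree, obtaining a closed-form decomposition in each multidegree with no termination argument needed. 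The syzygy manipulations at the core (forcing $g\,m_{ab}\in I$ for $g\in\Lambda$, and forcing either $c_{ax}=c_{ag}$ or $g\mu\in I$ for $g\in\Gamma(L)$) are the same computations as in Lemma \ref{lem-02} and the type II step of the paper's proof; what your route buys is a transparent, degree-by-degree description of $\HomI$ (in the style of Altmann--Christophersen), at the cost of the homogeneity reduction and the loop bookkeeping you flag. The paper's route is self-contained and handles non-reduced terms uniformly, at the cost of the $\Mon$-containment termination check.

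One imprecision to repair: you set $\lambda=\sqrt{m_{ab}}$ (resp.\ $\sqrt{\mu_L}$) and assert $\lambda\in\Lambda_{ab}$ (resp.\ $\Gamma_{a,L}$). That is false in general, since $m_{ab}$ may carry variables lying in no $\Lambda_g$ (e.g.\ when $\Lambda=\emptyset$ and $\Lambda_{ab}=\{1\}$, any $m_{ab}$ coprime to $ab$ is allowed). The sets $\Lambda_{ab}$ and $\Gamma_{a,L}$ consist of radicals of transversals of the $\Lambda_g$, resp.\ $\Gamma_g$, so the correct conclusion of your own syzygy argument is that $m_{ab}$ is \emph{divisible} by some $\lambda\in\Lambda_{ab}$ (this is exactly Lemma \ref{lem-02}), and likewise $\mu$ is divisible by some $\lambda_L\in\Gamma_{a,L}$; writing $\phi=c_{ab}(m_{ab}/\lambda)\phi_{ab}^{\lambda}$, resp.\ $\phi=\sum_L c_L(\mu/\lambda_L)\phi_{a,L}^{\lambda_L}$, with these divisors fixes the statement without changing the argument.
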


\begin{lemma} \label{lem-02}
Let $I$ be the edge ideal of a graph $G$.
Let $\phi \in \Hom_R(I,R/I)$ be a homomorphism and let $ab$ be an edge of $G$.
Suppose $\phi(ab)$ is written as a linear combination of monomials in $R$ and let $m$ be a monomial in $\phi(ab)$. Then either $\gcd(m,ab) \neq 1$ or $m$ is divisible by a monomial in $\Lambda_{ab}$.
\end{lemma}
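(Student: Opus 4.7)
The plan is to reduce the statement to a local claim at each vertex $g\in\Lambda=\Lambda^{ab}$, and then extract divisibility information by applying the relations of $I$ one edge at a time.

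First, I would observe that it suffices to show the following: if $\gcd(m,ab)=1$ and $m$ is a monomial appearing with nonzero coefficient in the unique expansion of $\phi(ab)\in R/I$ as a $\kk$-linear combination of monomials not in $I$, then for each $g\in\Lambda$ there exists $y_g\in\Lambda_g$ with $y_g\mid m$. Granting this, the squarefree monomial $\sqrt{\prod_{g\in\Lambda} y_g}$ divides $m$ and lies in $\Lambda_{ab}$ by definition, which proves the lemma.

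Next, fix $g\in\Lambda$. By the symmetry of the two parts of $\Lambda$, assume $g\in N(a)\setminus\{b\}$. The identity $g\cdot(ab)=b\cdot(ag)$ in $R$ together with $R$-linearity of $\phi$ yields
\[
g\,\phi(ab)\;=\;b\,\phi(ag)\quad\text{in } R/I.
\]
I would then show that $gm\in I$: if not, $\overline{gm}$ is a $\kk$-basis element of $R/I$ whose coefficient on the left hand side equals the nonzero coefficient of $m$ in $\phi(ab)$ (no other monomial $m'\neq m$ with $gm'\notin I$ can satisfy $gm'=gm$). Matching the right hand side forces some monomial $m''\notin I$ appearing in $\phi(ag)$ with $bm''=gm$ as monomials; since $g\neq b$ this gives $b\mid m$, contradicting $\gcd(m,ab)=1$.

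Once $gm\in I$, some minimal generator $xy$ of $I$ (an edge, possibly a loop) divides $gm$. We cannot have $xy\mid m$, for then $m\in I$, contradicting that $m$ is a basis element. Hence the extra factor $g$ in $gm$ is essential, forcing $g\in\{x,y\}$; say $x=g$, so the other endpoint $y_g$ satisfies $y_g\in N(g)$ and $y_g\mid m$. The hypothesis $\gcd(m,ab)=1$ rules out $y_g\in\{a,b\}$, giving $y_g\in N(g)\setminus\{a,b\}=\Lambda_g$, as required. The main subtlety is the bookkeeping in the degenerate cases, namely loops at $a$ or $b$ (so $g$ may coincide with $a$ or $b$) and the case when $ab$ is itself a loop, i.e.\ $a=b$; each of these still fits the same template, but one must verify in each subcase that the neighbor $y_g$ extracted from the edge dividing $gm$ genuinely avoids $\{a,b\}$.
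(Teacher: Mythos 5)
Your proof is correct and follows essentially the same route as the paper: for each $g\in\Lambda^{ab}$ you use the relation $g(ab)-b(ag)$ (resp.\ its mirror) to force $gm\in I$, since otherwise $a$ or $b$ would divide $m$, and then extract a divisor of $m$ lying in $\Lambda_g$; the paper's proof is just a terser version of this, without your explicit discussion of the loop/degenerate subcases or of the final assembly of the $y_g$ into an element of $\Lambda_{ab}$.
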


\begin{proof}
Suppose $\gcd(m,ab)=1$.
If $ab$ is an isolated edge then $1|m$ and there is nothing to prove. 
In the remaining of the proof suppose $ab$ is not an isolated edge.

Let $r\in \kk$ be the coefficient of $m$ in $\phi(ab)$.
Without loss of generality if a vertex $c$ is adjacent to $b$ then the relation $c(ab)-a(bc)$ implies $c\phi(ab)-a\phi(bc)=0$. If $rcm$ is canceled out by a monomial in $a\phi(bc)$ then $a|m$ which is a contradiction. Therefore $rcm$ is in $I$. This means that there exist a vertex $c'$ adjacent to $c$ such that $c'|m$. Hence for any vertex $c$ adjacent to $a$ or $b$, $m$ contains a vertex adjacent to $c$. This exactly means that $m$ is divisible by a monomial in $\Lambda_{ab}$.
\end{proof}

\begin{proof}[Proof of Theorem \ref{thm-Hom-gens}]
Let $N$ be the submodule of $\HomI$ generated by all of these homomorphisms.
Suppose $ab$ is an edge of $G$ and $\phi(ab)$ contains a term $rm$ in which $m$ is a monomial in $R$ and $r$ is a scalar. We show that modulo $N$ we can eliminate this term.

If $\gcd(m,ab)=1$ then by \ref{lem-02}, there is some $\lambda\in\Lambda_{ab}$ that divides $m$. Suppose $m=\lambda k$ then modulo $N$, $\phi = \phi- rk \phi_{ab}^\lambda$ and $(\phi- rk \phi_{ab}^\lambda)(ab)$ does not have the term $rm$.
Note that for any generator $xy\in I$, $\Mon((\phi-rk\phi_{ab}^\lambda)(xy)) \subseteq \Mon(\phi(xy))$.

Now without loss of generality suppose $b|m$ and $m=bm'$.
Let $L$ be the set of all vertices $x$ in $N(a)$ such that the monomial $xm'$ appears with a nonzero coefficient in $\phi(ax)$ and also $xm'\notin I$.
Choose an element $x'$ in $\Gamma(L)$. By definition of $\Gamma(L)$, there is an element $x\in L$ such that $xx'\notin I$. The relation $x'(ax)-x(ax')$ implies that $x'\phi(ax) - x\phi(ax')=0$ in $R/I$.
Since $\phi(ax')$ does not have the monomial $x'm'$, the monomial $x'xm'$ can not be canceled out. Hence $x'xm'$ is in $I$ which implies that $x'm' \in I$. Therefore for some vertex $\lambda_{x'}\in\Gamma_{x'}$ we have $\lambda_{x'}|m'$.
Now let $\lambda$ be the least common multiple of $\lambda_{x'}$ for all $x'\in \Gamma(L)$. It follows from the definition of $\Gamma_{a,L}$ that $\lambda \in \Gamma_{a,L}$.
Suppose $m'=k\lambda$.
Now modulo $N$, $\phi = \phi - rk\phi_{a,L}^\lambda$ and $(\phi - rk\phi_{a,L}^\lambda)(ab)$ does not have the term $rm$.
Furthermore, for any generator $xy\in I$, $\Mon((\phi-rk\phi^\lambda_{a,L})(xy)) \subseteq \Mon(\phi(xy))$. This means that modulo $N$ we can reduce any homomorphism in $\HomI$ to zero.
\end{proof}



\begin{example}
Let $G=C_n$ be the cycle with $n$ vertices. Let $V(C_n)=\{a_0,\ldots,a_{n-1}\}$.
If $n=3$ then there is no nonzero deformation of type I.
Choose the vertex $a_0\in V(G)$.
The induced subgraph $\Nb(a_0)$ is a graph with two isolated vertices $a_1$ and $a_2$.
For $L=\{a_1\},\{a_2\}$ or $\{a_1,a_2\}$ we have $\Gamma_{a,L} = \{1\}$.
Therefore we get 2 nontrivial deformations $\phi_{a_0,\{a_1\}}^1$ and $\phi_{a_0,\{a_2\}}^1$ at $a_0$. Similarly we have 4 nontrivial deformations at $a_1$ and $a_2$.
These 6 deformations generate $Hom_R(I(C_3),R/I(C_3))$.

For $n\geq 4$, Lemma \ref{conv-01} shows that $C_n$ does not have any deformations of type II.
Let $\mathbb{Z}_n=\{0,\ldots,n-1\}$ be the cyclic group of order $n$.
If $n=4$ or $6$ then for any $\overline{i}\in \mathbb{Z}_n$ any element of $\Lambda_{a_{\overline{i}}a_{\overline{i+1}}}$ belongs to $I$. Hence $C_4$ and $C_6$ are algebraically rigid.

$C_5$ has the following 5 type I nontrivial deformations.
\begin{align*}
a_0 a_1 & \mapsto a_3, \qquad a_1 a_2 \mapsto a_4,\\
a_2 a_3 & \mapsto a_0, \qquad a_3 a_4 \mapsto a_1,\\
a_0 a_4 & \mapsto a_2
\end{align*}
These five nontrivial deformations together with image of five derivations $\frac{\partial}{\partial a_0},\ldots,\frac{\partial}{\partial a_4}$ under $\delta^\ast$, generate $Hom_R(I(C_5),R/I(C_5))$.

Suppose $n\geq 7$.
For $\overline{i}\in \mathbb{Z}_n$, $\Lambda_{a_{\overline{i}} a_{\overline{i+1}}} = \{a_{\overline{i-2}}a_{\overline{i+3}}\}$.
We have $n$ nontrivial type I deformations defined as
\[
a_{\overline{i}} a_{\overline{i+1}} \mapsto a_{\overline{i-2}}a_{\overline{i+3}},
\]
for all $\overline{i}\in \mathbb{Z}_n$.

Therefore the only algebraically rigid cycles are $C_4$ and $C_6$.
\end{example}

\section{Rigidity of edge ideals of graphs}
\label{Rigidity}

Suppose $G$ is not a simple graph and $G$ has a loop on some vertex $x$. If $N(x)\neq \{x\}$,
then the separation at $x$ is a nontrivial deformation and $G$ is not algebraically rigid.
Now suppose  $N(x)=\{x\}$, i.e. the loop on $x$ is an isolated loop.
In this case the separation at $x$ is a trivial deformation but the type I deformation $\phi^1_{a^2}$ is a nontrivial deformation.
It follows that non square-free quadratic monomial ideals are never rigid. Therefore in this section we investigate the rigidity of simple graphs.


\begin{theorem} \label{thm-mainrigidity}
Let $I$ be the edge ideal of a simple graph $G$.
$I$ is rigid if and only if
\begin{enumerate}
\item for each edge $ab$ of $G$,
\[
\prod_{x\in \Lambda^{ab}} \Lambda_x \subseteq I, \text{ and}
\]
\item for each vertex $a$ of $G$ and subset $L\subseteq V(\Nb(a))$,
\[
\big(\prod_{x\in \Gamma(L)} \Gamma_x\big) \times \big(V(\Nb(a))\backslash( L \cup \Gamma(L)\big) \subseteq I. 
\]
\end{enumerate}
\end{theorem}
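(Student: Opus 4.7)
The plan is to apply Theorem~\ref{thm-Hom-gens}, which exhibits $\HomI$ as the $R/I$-module generated by the Type I maps $\phi_{ab}^\lambda$ (for $ab\in E(G)$ and $\lambda\in\Lambda_{ab}$) together with the Type II maps $\phi_{a,L}^\lambda$ (for $a\in V(G)$, nonempty $L\subseteq V(\Nb(a))$ and $\lambda\in\Gamma_{a,L}$). Rigidity of $I$ means $T^1(R/I)=0$, equivalently that every such generator lies in the image of $\delta^\ast$. The proof will proceed by analyzing the two families of generators separately and translating the triviality conditions into the combinatorial statements~(1) and~(2).

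For Type I, Lemma~\ref{lemTypeI} shows that a nonzero $\phi_{ab}^\lambda$ is always nontrivial. Rigidity therefore forces $\phi_{ab}^\lambda=0$ for every edge $ab$ and every $\lambda\in\Lambda_{ab}$, i.e.\ $\lambda\in I$ for every such $\lambda$. Because $\Lambda_{ab}$ consists of the square-free parts of the monomials in $\prod_{x\in\Lambda^{ab}}\Lambda_x$, and membership in a square-free monomial ideal is detected by the square-free part, this is exactly condition~(1).

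For Type II, because $G$ is simple there is no loop at any vertex, so Lemma~\ref{TypeIInontrivial} says that a nonzero $\phi_{a,L}^\lambda$ distinct from $\lambda\frac{\partial}{\partial a}$ is nontrivial. Rigidity therefore forces, for every triple $(a,L,\lambda)$, that $\phi_{a,L}^\lambda$ be either zero or equal to $\lambda\frac{\partial}{\partial a}$. The equality $\phi_{a,L}^\lambda=\lambda\frac{\partial}{\partial a}$ is equivalent to $\lambda z\in I$ for every $z\in N(a)\setminus L$; for $z\in\Gamma(L)$ this holds automatically from the defining property of $\Gamma_{a,L}$ (exploited in the proof of Lemma~\ref{lemTypeII}), so only $\lambda z\in I$ for $z\in V(\Nb(a))\setminus (L\cup\Gamma(L))$ needs to be checked, and quantifying over $\lambda\in\Gamma_{a,L}$ reproduces condition~(2). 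This gives the ``if'' direction cleanly: assuming (1) and (2), every Type I generator vanishes while every Type II generator equals $\lambda\frac{\partial}{\partial a}$, so both families lie in the image of $\delta^\ast$.

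The main obstacle lies in the ``only if'' direction for Type II: the triviality criterion above leaves open the possibility that $\phi_{a,L}^\lambda$ is the zero map (equivalently $\lambda x\in I$ for every $x\in L$) while condition~(2) nevertheless fails via some witness $y\in V(\Nb(a))\setminus(L\cup\Gamma(L))$ with $\lambda y\notin I$. To conclude that such a failure of~(2) still obstructs rigidity, I will need to produce a companion nontrivial first order deformation. The natural candidates are: replacing $L$ by $L\cup\{y\}$ and enlarging $\lambda$ into an element of $\Gamma_{a,L\cup\{y\}}$ so that $\phi_{a,L\cup\{y\}}^\lambda$ is both nonzero (since $\lambda y\notin I$) and different from $\lambda\frac{\partial}{\partial a}$; or, when the structure of $\Nb(a)$ permits, extracting a separation pair at $a$ or at a nearby vertex in the spirit of Remark~\ref{rmk-typeII}. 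Setting up this combinatorial bookkeeping uniformly, and thereby verifying that no gap remains between the disjunctive criterion ``$\phi_{a,L}^\lambda=0$ or $\phi_{a,L}^\lambda=\lambda\frac{\partial}{\partial a}$'' and the stronger conjunctive condition~(2), is where the delicacy of the argument lies.
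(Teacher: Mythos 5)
Your overall strategy is exactly the paper's: reduce to the generating set of Theorem~\ref{thm-Hom-gens}, kill the Type I generators via Lemma~\ref{lemTypeI} to obtain condition~(1), and compare the Type II generators with $\lambda\frac{\partial}{\partial a}$ via Lemma~\ref{TypeIInontrivial} to obtain condition~(2). The ``if'' direction and the Type~I half of the ``only if'' direction are carried out correctly and coincide with the paper's argument.

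The proposal is nevertheless incomplete, and you have located the incompleteness yourself: in the ``only if'' direction for Type~II, a failure of condition~(2) at a triple $(a,L,\lambda)$ produces a witness $y\in V(\Nb(a))\setminus(L\cup\Gamma(L))$ with $\lambda y\notin I$, which guarantees $\phi_{a,L}^\lambda\neq\lambda\frac{\partial}{\partial a}$ but does \emph{not} guarantee $\phi_{a,L}^\lambda\neq 0$; Lemma~\ref{TypeIInontrivial} requires both hypotheses. If $\lambda x\in I$ for every $x\in L$, the map is identically zero and yields no contradiction with rigidity, so the witness to the failure of~(2) produces no nontrivial deformation by itself. Your proposed repairs (enlarging $L$ to $L\cup\{y\}$, or extracting a separation pair as in Remark~\ref{rmk-typeII}) are plausible but entirely unexecuted, and neither is automatic: after replacing $L$ by $L\cup\{y\}$ the set $\Gamma$ grows, so $\lambda$ must be replaced by some $\lambda'\in\Gamma_{a,L\cup\{y\}}$, and one must still verify that $\lambda'y\notin I$ and that the new map is neither zero nor a multiple of $\frac{\partial}{\partial a}$. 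Until this bookkeeping is done, the ``only if'' direction is not proved. It is worth noting that the paper's own proof does not address this point either --- it invokes Lemma~\ref{TypeIInontrivial} without checking the nonvanishing hypothesis --- so you have exposed a genuine soft spot in the published argument rather than introduced a new error; but as submitted, your proof stops exactly where the remaining work lies.
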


\begin{proof}
Suppose for an edge $ab$ the conditions in $(1)$ is satisfied then for each $\lambda\in\Lambda$, $\phi_{ab}^\lambda$ is zero. If for $a\in V(G)$,$L\subseteq V(\Nb(a))$ and $\lambda\in\Gamma_{a,L}$, the condition in $(2)$ is satisfied then $\phi_{a,L}^\lambda = \lambda \frac{\partial}{\partial a}$. Hence $I$ is rigid.

Conversely, suppose $I$ is rigid.
For each edge $ab$ of $G$, if $\prod_{x\in\Lambda} \Lambda_x$ contains a monomial $\lambda$ such that $\lambda\notin I$ then $\phi_{ab}^\lambda \neq 0$ and by Lemma \ref{lemTypeI}, it corresponds to a nontrivial deformation.
Therefore $\prod_{x\in \Lambda} \Lambda_x$ is a subset of $I$.
Now let $a$ be a vertex and $L$ be a subset of $V(\Nb(a))$. If for $\lambda\in \prod_{x\in \Gamma(L)}\Gamma_x$ and $x\in (V(\Nb(a))\backslash(L\cup\Gamma))$, $\lambda x$ does not belong to $I$ then by Lemma \ref{TypeIInontrivial}, $\phi_{a,L}^\lambda$ is a nontrivial deformation, which is a contradiction. Hence if $I$ is rigid then both of the conditions $(1)$ and $(2)$ hold.
\end{proof}

The following lemmata are useful in computations.

\begin{lemma}
\label{conv-01}
Let $a$  be a vertex of graph $G$ with no loop on it. Suppose either 
\begin{enumerate}
\item vertex $a$ does not lie on any 3-cycle, or
\item vertex $a$ belongs to a leaf,
\end{enumerate}
then the derivation $\frac{\partial}{\partial a}$ is the only deformation of type II associated with $a$.
\end{lemma}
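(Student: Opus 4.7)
The plan is to establish that for every type II deformation $\phi_{a,L}^\lambda$ at the vertex $a$ one has the identity
\[
\phi_{a,L}^\lambda \;=\; \lambda\,\frac{\partial}{\partial a}
\qquad\text{in } \HomI.
\]
Since $\lambda\,\partial/\partial a$ lies in the image of $\delta^\ast\colon \Der_\kk(R)\to\HomI$, this will indeed mean that, up to $R$-multiples, the only type II deformation associated with $a$ is the derivation $\partial/\partial a$. Computing both sides of the identity on each generator of $I$, the difference $\lambda\,\partial/\partial a - \phi_{a,L}^\lambda$ vanishes on all generators not divisible by $a$, and on a generator $ax$ with $x\in L$ it sends $ax\mapsto \lambda x - \lambda x = 0$; it therefore vanishes on everything except generators $ax$ with $x\in V(\Nb(a))\setminus L$, on which its value is $\lambda x$. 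Hence the entire lemma reduces to showing
\[
\lambda x \in I \qquad \text{for every } x\in V(\Nb(a))\setminus L.
\]

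Split $V(\Nb(a))\setminus L$ as $\Gamma(L)\sqcup\bigl(V(\Nb(a))\setminus(L\cup\Gamma(L))\bigr)$. For $x\in\Gamma(L)$, the very membership $\lambda\in\Gamma_{a,L}$ forces some vertex of $\Gamma_x=N(x)\setminus\{a\}$ to divide $\lambda$, so $\lambda x\in I$ automatically. The real content is therefore to prove that under either hypothesis the leftover set $V(\Nb(a))\setminus(L\cup\Gamma(L))$ is empty whenever the type II deformation $\phi_{a,L}^\lambda$ is defined, i.e.\ whenever $L$ is nonempty and $\Gamma_{a,L}$ is nonempty.

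Under hypothesis (1), the absence of a 3-cycle through $a$ means that no two vertices of $N(a)$ are adjacent in $G$, so $\Nb(a)$ is the complete graph on $N(a)$. For any nonempty $L$, every vertex outside $L$ is then adjacent in $\Nb(a)$ to each vertex of $L$, so $\Gamma(L)=V(\Nb(a))\setminus L$ and the leftover set is empty. Under hypothesis (2), either $a$ itself is a leaf vertex, in which case $V(\Nb(a))$ consists of a single vertex and the only option $L=V(\Nb(a))$ gives $\phi_{a,L}^1=\partial/\partial a$ by Remark \ref{rmk-typeII}; or $a$ has a leaf neighbor $b$. In the latter case $\Gamma_b=N(b)\setminus\{a\}=\emptyset$, so nonemptiness of $\Gamma_{a,L}$ forces $b\notin\Gamma(L)$. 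But $b$ has no edge in $G_{N(a)}$, hence is adjacent in $\Nb(a)$ to every other vertex of $V(\Nb(a))$; so if $b\notin L$, the nonemptiness of $L$ would place $b$ into $\Gamma(L)$, a contradiction. Thus $b\in L$, and the same adjacency fact puts every $y\in V(\Nb(a))\setminus L$ into $\Gamma(L)$, emptying the leftover set once again.

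The main thing to keep straight here, rather than a real obstacle, is the interplay between adjacency in $G$ and adjacency in the complementary graph $\Nb(a)$: both of the triangle-free and leaf hypotheses are exactly what one needs to convert this interplay into the vanishing of $V(\Nb(a))\setminus(L\cup\Gamma(L))$, which by the reduction above is all that is required.
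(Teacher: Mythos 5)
Your proof is correct and follows essentially the same route as the paper's: in case (1) you observe that $\Nb(a)$ is complete so $V(\Nb(a))=L\cup\Gamma(L)$, and in case (2) you use that a leaf neighbour $b$ is adjacent in $\Nb(a)$ to everything, forcing $b\in L$ (since $\Gamma_b=\emptyset$ would kill $\Gamma_{a,L}$ otherwise) and again emptying $V(\Nb(a))\setminus(L\cup\Gamma(L))$. The only difference is that you spell out explicitly the reduction to $\lambda x\in I$ for $x\notin L$, which the paper leaves implicit.
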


\begin{proof}
Suppose $a$ satisfies $(1)$. Since $a$ does not lie on any 3-cycle, any two vertices in $\Nb(a)$ are connected and $\Nb(a)$ is a complete graph.
Therefore for any subset $L\subseteq V(\Nb(a))$, $V(\Nb(a)) = L\cup \Gamma(L)$ which implies that there are no nontrivial type II deformation at vertex $a$.

Now suppose $a$ belongs to a leaf.
If $a$ is the leaf vertex then the assertion follows from (1).
Otherwise, let $x$ be the leaf vertex adjacent to $a$.
For any subset $L\subseteq V(\Nb(a))$ if $x\in L$ then $V(\Nb(a))=L\cup \Gamma(L)$ and the only deformation at $a$ is a multiple of $\frac{\partial}{\partial a}$. If $x\notin L$ then $x\in \Gamma(L)$. Therefore for any $\lambda\in \Gamma_{a,L}$, $a$ (the only vertex adjacent to $x$) divides $\lambda$ which is a contradiction. 
\end{proof}

\begin{remark}
The first condition in \ref{thm-mainrigidity} is satisfied if and only if $G$ does not have any nonzero type I deformations and the second condition is satisfied if and only if there is no nontrivial type II deformations.
Therefore when $G$ is a simple graph with no 3-cycles then by Lemma \ref{conv-01}, $I(G)$ does not admit any nontrivial type II deformations. In this case all of the nonzero type I deformations associated with edges of $G$ form a minimal generating set for $T^1(R/I(G))$. Furthermore, the rigidity of $G$ can be checked by the simple condition that for all edge $ab$ of $G$, $\prod_{x\in \Lambda} \Lambda_x \subseteq I(G)$, where $\Lambda = N(a)\cup N(b) \backslash \{a,b\}$. Examples of simple graphs with no induced 3-cycles include the bipartite graphs and second letterplace ideals (see Example \ref{exm-letterplace} for definition).
\end{remark}

\begin{lemma}
\label{conv-02}
If an edge $ab$ is a branch then there is no nonzero type I deformation associated with edge $ab$.
\end{lemma}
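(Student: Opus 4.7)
The plan is to show that when $ab$ is a branch, the index set $\Lambda_{ab}$ parametrizing type I deformations is empty, so there is no type I deformation associated with $ab$ at all, let alone a nonzero one. The argument is short once the definitions are unpacked.

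First I would unpack the hypothesis: the edge $ab$ being a branch means there is a leaf edge $e'\neq ab$ sharing a vertex with $ab$. By the symmetry between $a$ and $b$, I would assume without loss of generality that $e'=\{a,x\}$ for some vertex $x\neq b$. Next I would locate the leaf vertex of $e'$: since $a$ lies on both $ab$ and $e'$ it has degree at least two and so cannot be the leaf vertex of $e'$; therefore $x$ must be the leaf vertex, and in particular $N(x)=\{a\}$. If instead $ab$ is itself a leaf with, say, $a$ as its leaf vertex, the same reasoning applies after swapping the roles of $a$ and $b$: the leaf edge promised by the branch hypothesis must then meet $b$, and its other endpoint plays the role of $x$.

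The conclusion then falls out from the definitions. Since $x\in N(a)\backslash\{b\}$, we have $x\in \Lambda^{ab}$, while
\[
\Lambda_x \;=\; N(x)\backslash\{a,b\} \;=\; \{a\}\backslash\{a,b\} \;=\; \emptyset.
\]
Therefore the product $\prod_{g\in \Lambda^{ab}}\Lambda_g$ has an empty factor, hence is empty (we are not in the isolated-edge regime that would force the convention $\Lambda_{ab}=\{1\}$, since $\Lambda^{ab}$ is nonempty). Consequently $\Lambda_{ab}=\emptyset$, so no symbol $\phi^{\lambda}_{ab}$ is even defined, which is stronger than the claim.

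I do not anticipate a real obstacle; the one spot calling for care is to verify in every sub-case that the witnessing leaf vertex $x$ is distinct from both $a$ and $b$, so that $x$ genuinely lies in $\Lambda^{ab}$ and its unique neighbor is already removed by the definition of $\Lambda_x$. This is essentially automatic once one observes that a shared endpoint of $ab$ with the adjacent leaf edge has degree at least two and therefore cannot itself be the leaf vertex of that edge.
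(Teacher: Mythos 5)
Your proof is correct and is in substance the same as the paper's: the paper cites condition (1) of Theorem \ref{thm-mainrigidity}, which for a branch holds vacuously for exactly the reason you make explicit, namely that the leaf vertex $x$ adjacent to $a$ or $b$ lies in $\Lambda^{ab}$ while $\Lambda_x=N(x)\setminus\{a,b\}=\emptyset$, forcing $\Lambda_{ab}=\emptyset$. Your version simply spells out the details the paper leaves implicit.
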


\begin{proof}
This follows immediately from condition $(1)$ in Theorem \ref{thm-mainrigidity}.
\end{proof}

We conclude this section by giving another proof for the main result on rigid graphs in \cite{ABHL}. Our description for rigidity of graphs significantly simplifies the proof.

\begin{theorem}
Let $G$ be a simple graph such that $G$ does not contain any induced cycle of length 4,5 or 6. Then $G$ is rigid if and only if each edge of $G$ is a branch and each vertex of a 3-cycle of $G$ belongs to a leaf.
\end{theorem}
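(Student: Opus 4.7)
The plan is to apply Theorem \ref{thm-mainrigidity} together with Lemmas \ref{conv-01} and \ref{conv-02}. For the backward direction, assume every edge of $G$ is a branch and every vertex of a 3-cycle lies on a leaf. Lemma \ref{conv-02} applied at each edge gives condition (1) of Theorem \ref{thm-mainrigidity}. For condition (2), any vertex $v$ either is not on a 3-cycle (case (1) of Lemma \ref{conv-01}) or lies on a 3-cycle and hence by hypothesis on a leaf (case (2)); in both cases the only type II deformation at $v$ is the derivation $\frac{\partial}{\partial v}$. Hence $T^1(R/I)=0$ and $G$ is rigid.

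For the forward direction, I assume $G$ is rigid and has no induced $C_4, C_5, C_6$, and prove (a) every edge is a branch and (b) every 3-cycle vertex is on a leaf by contrapositive. I would handle (b) first. Suppose $v$ lies on a 3-cycle $vxy$ but has no leaf neighbor, so every vertex in $N(v)$ has degree at least $2$. Since $xy \in E(G)$, the vertices $x,y$ are non-adjacent in $\Nb(v)$, so $\Nb(v)$ is not complete. If $\Nb(v)$ is disconnected, taking $L$ to be a connected component gives $\Gamma(L)=\emptyset$, and the homomorphism $\phi_{v,L}^1$ is a nontrivial type II deformation (a separation at $v$), contradicting rigidity. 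If $\Nb(v)$ is connected but not complete, take a shortest $\Nb(v)$-path $x = u_0, u_1, \ldots, u_k = y$ between non-adjacent vertices in $\Nb(v)$ (so consecutive $u_i$'s are non-adjacent in $G$ and non-consecutive ones are adjacent); a suitable choice of $L$ (for instance $L=\{u_1\}$ when $k=2$), together with $\lambda \in \Gamma_{v,L}$ built from the neighborhoods of the $u_i$'s, should violate condition (2). The absence of induced $C_4, C_5, C_6$ through $v$ is what rules out the adjacencies that would force $\lambda u \in I$ for the candidate $u \in V(\Nb(v))\setminus(L\cup\Gamma(L))$.

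For (a), suppose $ab$ is an edge that is not a branch. If some $\Lambda_g = \emptyset$ for $g \in \Lambda^{ab}$, then since $g$ is not a leaf yet $N(g)\subseteq\{a,b\}$, we have $N(g)=\{a,b\}$, so $a,b,g$ form a triangle with $g$ of degree $2$; neither $a$ nor $b$ is a leaf and $g$'s only neighbors are $a,b$, so $g$ has no leaf neighbor and $g$ violates (b), which was already established. Otherwise every $\Lambda_g$ is nonempty, and the plan is to choose $\lambda_g \in \Lambda_g$ for each $g \in \Lambda^{ab}$ so that $\lambda = \sqrt{\prod_g \lambda_g} \notin I$; the resulting $\phi_{ab}^\lambda$ is then a nonzero hence nontrivial type I deformation by Lemma \ref{lemTypeI}, contradicting rigidity. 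Any adjacency $\lambda_g \sim \lambda_{g'}$ that would obstruct such a choice, combined with the edge $ab$ and the length-one steps $g\to\lambda_g$ and $g'\to\lambda_{g'}$, closes to a cycle of length at most $6$ containing $ab$; the hypothesis on induced cycles then provides the room to refine the $\lambda_g$'s so that no such adjacency occurs.

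The principal obstacle is the combinatorial case analysis in the forward direction: one must verify that each configuration forcing $\lambda \in I$ for (a) or $\lambda u \in I$ for (b) really does yield an \emph{induced} cycle of length $4$, $5$, or $6$, rather than merely some short cycle that might be chorded. Since all candidate cycles are anchored by either the edge $ab$ or the 3-cycle at $v$, with each $\lambda_g$ contributing a single additional step, the relevant lengths are naturally bounded by $6$; the care lies in refining the choice of the $\lambda_g$'s (or of $L$ and the extremal vertex $u$) so that the short cycle actually produced is induced.
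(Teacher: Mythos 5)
Your backward direction is correct and coincides with the paper's: Lemma \ref{conv-02} rules out type I deformations and Lemma \ref{conv-01} rules out type II deformations, so rigidity follows from Theorem \ref{thm-mainrigidity}. The forward direction, however, is only a plan, and the one concrete choice you do commit to is the wrong one. For a vertex $v$ of a 3-cycle $vxy$ with no leaf neighbour, the whole point of condition (2) of Theorem \ref{thm-mainrigidity} is to test $\lambda$ against some vertex of $V(\Nb(v))\setminus(L\cup\Gamma(L))$. Your shortest-path choice $L=\{u_1\}$ puts both $x$ and $y$ into $\Gamma(L)$ and can leave $V(\Nb(v))\setminus(L\cup\Gamma(L))$ empty (e.g.\ when $N(v)=\{x,y,u_1\}$ and the other neighbours of $u_1$ lie outside $N(v)$), in which case condition (2) is vacuous for this $L$ and no contradiction arises. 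The choice that works, and the one the paper makes, is $L=\{x\}$: since $xy\in E(G)$, the vertex $y$ is non-adjacent to $x$ in $\Nb(v)$, hence $y\notin\Gamma(L)$ and $y$ automatically lies in $V(\Nb(v))\setminus(L\cup\Gamma(L))$; rigidity forces $\Nb(v)$ to be connected, so $\Gamma(L)\neq\emptyset$, the no-leaf hypothesis makes each $\Gamma_g$ nonempty, and then $\lambda y\in I$ yields a vertex $w\mid\lambda$ adjacent both to $y$ and to some $g\in\Gamma(L)$, closing the 4-cycle $v,g,w,y$. The missing idea is precisely that the 3-cycle itself hands you the test vertex $y$.

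For the branch condition you likewise defer the crux: ``the hypothesis on induced cycles then provides the room to refine the $\lambda_g$'s'' is not an argument. What must be shown is that if \emph{every} $\lambda\in\Lambda_{ab}$ lies in $I$, then $G$ contains an induced cycle of length 4, 5 or 6; this requires exhibiting a concrete closed walk (in the paper, $a,x_i,y_i,y_j,x_j,b$ where $y_iy_j$ is the edge dividing $\lambda$, after first disposing of the cases $\Lambda^{ab}=\emptyset$ and $|\Lambda^{ab}|=1$) and controlling its chords. You correctly flag this as ``the principal obstacle,'' but naming the obstacle is not overcoming it, so as written the forward direction is incomplete. Your reduction of the degenerate case $\Lambda_g=\emptyset$ to the already-treated 3-cycle condition is fine, as is the disconnected-$\Nb(v)$ subcase via separations.
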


\begin{proof}
If each edge of $G$ is a branch then by Lemma \ref{conv-02} there is no deformation of type I.
Each vertex of $G$ either does not lie on a 3-cycle or it belongs to a leaf. It follows from Lemma \ref{conv-01} that there is no deformation of type II.
Hence $G$ is algebraically rigid.

Conversely, suppose $G$ is algebraically rigid and it does not contain any induced cycle of length 4,5 or 6.
Let $ab$ be an edge of $G$. The edge $ab$ can not be an isolated edge since otherwise $\phi_{ab}^1$ gives a nontrivial deformation.
Suppose on the contrary that $ab$ is not a branch. Let $\{x_1,\ldots,x_k\}$ be the set of vertices that are adjacent to $a$ or $b$ other than $a$ and $b$ themselves.
If $k=1$ then for any $\lambda$ in the nonempty set $N(x_1)-\{a,b\}$, $\phi_{ab}^\lambda$ is a nontrivial deformation. Hence $k\geq 2$. Now for any $\lambda\in\Lambda_{ab}$ there is $y_i$ and $y_j$ respectively adjacent to $x_i$ and $x_j$ such that $y_iy_j\in I$.
Now the induced cycle on (not necessarily distinct) vertices $a,b,x_i,y_i,x_j$ and $y_j$ contains an induced cycle of length 4,5 or 6, which is a contradiction.

Let $a$ be a vertex of a 3-cycle. Let $b_1,b_2$ be the other two vertices of this 3-cycle. If $N(a) = \{b_1,b_2\}$ then we have a separation at $a$ which is a contradiction. Let $N(a) = \{b_1,b_2,x_1,\ldots,x_k\}$. By \cite[Theorem 3.1]{ABHL} or the discussion at the end of Section \ref{Separation}, $\Nb(a)$ is connected.
Let $L=\{b_1\}$. If none of the vertices in $\Nb(a)$ is a leaf then $\Gamma_{a,L}$ is nonempty.
Therefore for any $\lambda\in \Gamma_{a,L}$, $\phi_{a,L}^\lambda$ should be a multiple of the derivation $\frac{\partial}{\partial a}$.
Choose some $\lambda\in \Gamma_{a,L}$. Since $\phi_{a,L}^\lambda$ sends $ab_2$ to zero, $b_2\lambda$ is in $I$. This implies that there is a vertex $y_i$ adjacent to some $x_i$ in $\Gamma(L)$ such that $y_i|\lambda$ and $ax_iy_ib_2$ induces a 4-cycle, which is a contradiction. Therefore each vertex of a 3-cycle belongs to a leaf.
\end{proof}



\section{Second Cotangent Cohomology}
\label{T2}
Throughout this section $G$ is a simple graph and $I(G)$ is a square-free monomial ideal.
If $G$ is not a simple graph then Lemma \ref{TwoHeadVar} which is essential to our arguments is no longer valid.

Let $I$ be an ideal in a polynomial ring $R$ and let $A=R/I$ be the quotient ring.
Let 
$$0 \to K \to R^m \to R \to A \to 0$$
be an exact sequence of $R$-modules.
We denote the submodule of $K$ generated by the Koszul relations by $K_0$.
Recall that the second cotangent cohomology $T^2(A)$ is defined as the cokernel of the induced map
$$
\Phi: \Hom_R(R^m,A) \longrightarrow \Hom_A(K/K_0,A).
$$
We fix a total order $\prec$ on $E(G)$ the edge set of $G$.
For $ab\in E(G)$, let $\epsilon_{ab}$ be the standard basis of $R^m$.
As a submodule of $R^m$, $K$ is generated by relations $r_{ab,bc}$ and $r_{ab,cd}$ defined below,
\begin{enumerate}
\item for $ab,bc\in I$ with $ab \prec bc$, $r_{ab,bc} = r_{bc,ab} = -c\epsilon_{ab}+a\epsilon_{bc}$  and,
\item for $ab,cd\in I$ with $ab \prec cd$, $r_{ab,cd} = r_{cd,ab} = -cd\epsilon_{ab}+ab\epsilon_{cd}$.
\end{enumerate}

The relations of second form are Koszul relations and they vanish in the sub-quotient $K/K_0$. Therefore any minimal generator of $K/K_0$ can be denoted by two adjacent edges $ab$ and $bc$ of $G$.
For a subset $F$ of edges of $G$ and for an edge $ab \in F$, $\sigma(F,ab)$ is defined to be the number of elements less than $ab$ in the totally ordered set $(F,\prec)$.

\begin{lemma}
Let $ab$ be a generator of $I$. The map $\phi_{ab}:K/K_0\to R/I$ defined as
\begin{enumerate}
\item for any edge $bc$ adjacent to $ab$ sending $r_{ab,bc}$ to $(-1)^{\sigma(\{ab,bc\},bc)}c$, i.e. the coefficient of $\epsilon_{ab}$ in $r_{ab,bc}$,
\item for any edge $ac$ adjacent to $ab$ sending $r_{ab,ac}$ to $(-1)^{\sigma(\{ab,ac\},ac)}c$, i.e. the coefficient of $\epsilon_{ab}$ in $r_{ab,ac}$,
\end{enumerate}
and sending any other generator of $K/K_0$ to zero, 
is an $R$-module homomorphism.
Furthermore, as $ab$ varies in $E(G)$, the homomorphisms $\phi_{ab}$ form a generating set for the image of $\Phi$.
\end{lemma}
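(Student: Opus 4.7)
The plan is to realize each $\phi_{ab}$ as the image under $\Phi$ of the obvious coordinate functional on $R^m$, whence well-definedness and the generation statement become essentially formal.

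First I would introduce, for each edge $ab\in E(G)$, the $R$-linear map $\psi_{ab}\colon R^m\to A$ defined on the basis by $\psi_{ab}(\epsilon_{cd})=\delta_{ab,cd}$ (the $ab$-th coordinate projection followed by the quotient $R\to A$). The family $\{\psi_{ab}\}_{ab\in E(G)}$ is an $A$-basis of $\Hom_R(R^m,A)\cong A^m$, so it already generates the whole source of $\Phi$. Consequently the image of $\Phi$ is generated, as an $A$-module, by the restrictions $\Phi(\psi_{ab})$ to $K/K_0$. The second statement of the lemma will therefore reduce to the identification $\Phi(\psi_{ab})=\phi_{ab}$.

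To prove this identification I would compute $\psi_{ab}$ on each of the listed generators of $K$. For a non-Koszul generator $r_{ab,bc}$ adjacent to $ab$, the coefficient of $\epsilon_{ab}$ is $-c$ if $ab\prec bc$ and $+c$ if $bc\prec ab$, which matches $(-1)^{\sigma(\{ab,bc\},bc)}c$ in both cases; the same check works for generators of the form $r_{ab,ac}$. For any non-Koszul generator $r_{ef,fg}$ with neither edge equal to $ab$, the coefficient of $\epsilon_{ab}$ is zero. For a Koszul generator $r_{ab,cd}=-cd\,\epsilon_{ab}+ab\,\epsilon_{cd}$ with $ab$ and $cd$ disjoint, $\psi_{ab}$ returns $-cd\in I$, which vanishes in $A$; and Koszul generators not involving $ab$ likewise vanish. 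Thus $\psi_{ab}$ sends $K_0$ into $0$, descends to a well-defined map $K/K_0\to A$, and that descended map coincides with $\phi_{ab}$ on every generator of $K/K_0$.

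Once this identification is in place, the first assertion of the lemma (that $\phi_{ab}$ is a well-defined $R$-module homomorphism on $K/K_0$) is automatic, since it is by construction the restriction to $K/K_0$ of the $R$-linear map $\psi_{ab}$. The second assertion (that the $\phi_{ab}$ generate $\im\Phi$) is equally automatic from the fact that the $\psi_{ab}$ generate $\Hom_R(R^m,A)$.

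The only non-routine step is the sign bookkeeping in the identification $\Phi(\psi_{ab})=\phi_{ab}$: one has to confirm that the author's convention $r_{ab,bc}=r_{bc,ab}$ with the sign determined by the total order $\prec$ matches $(-1)^{\sigma(\{ab,bc\},bc)}c$. Beyond that, the proof is a direct unwinding of definitions.
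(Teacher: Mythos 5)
Your proposal is correct and follows essentially the same route as the paper: the author likewise defines the coordinate functional $\epsilon_{ab}\in\Hom_R(R^m,A)$, observes that these generate the source of $\Phi$, and identifies $\Phi(\epsilon_{ab})$ with $\phi_{ab}$ by reading off the coefficient of $\epsilon_{ab}$ in each relation. Your version simply spells out the sign and Koszul-vanishing checks that the paper leaves implicit.
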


\begin{proof}
For all edges $ab$ of $G$, let $\epsilon_{ab}$ be the standard basis of $R^m$. We also denote the $R$-module map in $\Hom_R(R^m,A)$ sending $\epsilon_{ab}$ to $1$ and the other basis elements to zero by $\epsilon_{ab}$. Note that the image of the homomorphisms $\epsilon_{ab}$ for all $ab\in E(G)$ generates the image of $\Phi$.
Now the image of the map $\epsilon_{ab}$ under $\Phi$ is exactly the map $\phi_{ab}$ defined above.
\end{proof}

\begin{example} \label{exmT2cyc3}
Let $G$ be the 3-cycle on vertex set $V(G)=\{a,b,c\}$ and let $I$ be the edge ideal of $G$ in polynomial ring $R=\kk[G]$.
The ideal $I$ has 3 relations $r_{ab,bc},r_{ac,bc}$ and $r_{ab,ac}$ which are not Koszul.
The $R$-module $K/K_0$ is generated by two elements $r_{ab,bc}$ and $r_{ab,ac}$, since $r_{ac,bc} = r_{ab,ac} - r_{ab,bc}$.
Suppose $ab\prec bc \prec ac$.
The map $K/K_0 \to R^m$ is defined as
\[
\xymatrix@C=7pc{
K/K_0 \ar^{
\begin{tiny}
\begin{bmatrix}
c &  c\\
-a & 0\\
0 &  -b
\end{bmatrix}
\end{tiny}
}[r]& R^m,
}
\]
which induces the map,
\[
\xymatrix@C=7pc{
\Hom_R(R^m,R/I) \ar^{
\begin{tiny}
\begin{bmatrix}
c &  -a & 0\\
c & 0 & -b
\end{bmatrix}
\end{tiny}
}[r]& \Hom_R(K/K_0,R/I).
}
\]
The columns of the matrix above from left to right correspond to $\phi_{ab},\phi_{bc}$ and $\phi_{ac}$. An easy computation shows that this map is surjective. Hence $T^2(R/I) = 0$.
\end{example}

\begin{remark} \label{remrel}
Let $I$ be the edge ideal of a graph $G$.
The relations of $K/K_0$ is generated by the relations of $K$ plus the generators of $K_0$.
Let $G(I)$ be the set of minimal generators of $I$ with a total order $\prec$.
For any $A\subseteq G(I)$ denote the least common multiple of monomials in $A$ by $u_A$.
For any $F=\{ab,cd,ef\} \subseteq G(I)$,
\[r_{ab,cd,ef} = 
(-1)^{\sigma(F,ab)} \frac{u_F}{u_{F\backslash\{ab\}}} r_{cd,ef} +
(-1)^{\sigma(F,cd)} \frac{u_F}{u_{F\backslash\{cd\}}} r_{ab,ef} +
(-1)^{\sigma(F,ef)} \frac{u_F}{u_{F\backslash\{ef\}}} r_{ab,cd} 
\]
generate the module of relations of $K$.
The fact that this indeed is a generating set for module of relations of $K$ follows from the exactness of the Taylor complex (see \cite[Chapter 7]{HeHiMon}).

Only the relations $r_{ab,cd,ef}$ for which at least one of $r_{ab,cd}, r_{ab,ef}, r_{cd,ef}$ is not Koszul gives a relation of $K/K_0$.
Therefore the relations of $K/K_0$ have one of the following 5 forms.
\begin{enumerate}
\item For any generator $r_{ab,bc}\in K/K_0$ with $ab\prec bc$, we have a relation
\[
b r_{ab,bc}
\]
of $K/K_0$ since $b r_{ab,bc} = -bc\epsilon_{ab} + ab \epsilon_{bc} = 0$ in $K/K_0$.
\item For $r_{ab,bc}$ and $de \in I$ with $\{a,b,c\}\cap \{d,e\} =\emptyset$, that is when we have a subgraph as
\begin{center}
\begin{tikzpicture}[scale=1, vertices/.style={draw,fill=black, circle, inner 
sep=1.5pt}]
\node [vertices, label=below:{$a$}] (0) at (0,0){};
\node [vertices, label=below:{$b$}] (1) at (1,0){};
\node [vertices, label=below:{$c$}] (2) at (2,0){};
\node [vertices, label=below:{$d$}] (3) at (4,0){};
\node [vertices, label=below:{$e$}] (4) at (5,0){};

\foreach \to/\from in {0/1, 1/2, 3/4}
\draw [-,thick] (\to)--(\from);
\end{tikzpicture}
\end{center}
we have the following relation of $K/K_0$.
\[
de r_{ab,bc},
\]
since $ (-1)^{\sigma(F,de)+1} de r_{ab,bc} =
(-1)^{\sigma(F,bc)} cr_{ab,de} +
(-1)^{\sigma(F,ab)} ar_{bc,de} = 0 \text{ in } K/K_0$.

\item For two generators $r_{ab,bc},r_{bc,cd}$, that is when we have a subgraph as
\medskip
\begin{center}
\begin{tikzpicture}[scale=1, vertices/.style={draw,fill=black, circle, inner 
sep=1.5pt}]
\node [vertices, label=below:{$a$}] (0) at (0,0){};
\node [vertices, label=below:{$b$}] (1) at (1,0){};
\node [vertices, label=below:{$c$}] (2) at (2,0){};
\node [vertices, label=below:{$d$}] (3) at (3,0){};
\foreach \to/\from in {0/1, 1/2, 2/3}
\draw [-,thick] (\to)--(\from);
\end{tikzpicture}
\end{center}
we get the relation
\[
(-1)^{\sigma(F,ab)} ar_{bc,cd} + (-1)^{\sigma(F,cd)} d r_{ab,bc},
\]
since $(-1)^{\sigma(F,ab)} ar_{bc,cd} + (-1)^{\sigma(F,cd)} d r_{ab,bc} = 
 (-1)^{\sigma(F,bc)+1} r_{ab,cd} = 0$ in $K/K_0$.

\item For generators $r_{ab,ac},r_{ac,ad},r_{ad,ab}$ of $K/K_0$, that is when we have a subgraph as 
\medskip
\begin{center}
\begin{tikzpicture}[scale=1, vertices/.style={draw,fill=black, circle, inner 
sep=1.5pt}]
\node [vertices, label=below:{$a$}] (0) at (0,0){};
\node [vertices, label=below:{$b$}] (1) at (-1,0){};
\node [vertices, label=below:{$c$}] (2) at (.5,.666){};
\node [vertices, label=below:{$d$}] (3) at (.5,-.666){};
\foreach \to/\from in {0/1, 0/2, 0/3}
\draw [-,thick] (\to)--(\from);
\end{tikzpicture}
\end{center}
we get the relation
\[
(-1)^{\sigma(F,ab)} br_{ac,ad} + (-1)^{\sigma(F,ac)} cr_{ab,ad}
+ (-1)^{\sigma(F,ad)} dr_{ab,ac}
\]
of $K/K_0$.
\item For $r_{ab,bc},r_{ac,bc}$ and $r_{ab,ac}$, that is when we have a subgraph as
\begin{center}
\begin{tikzpicture}[scale=1, vertices/.style={draw,fill=black, circle, inner 
sep=1.5pt}]
\node [vertices, label=left:{$a$}] (0) at (0,0){};
\node [vertices, label=right:{$b$}] (1) at (1,0){};
\node [vertices, label=below:{$c$}] (2) at (.5,-.666){};
\foreach \to/\from in {0/1, 1/2, 0/2}
\draw [-,thick] (\to)--(\from);
\end{tikzpicture}
\end{center}
we get the following relation of $K/K_0$.
\[
(-1)^{\sigma(F,ab)} r_{ac,bc} + (-1)^{\sigma(F,bc)} r_{ab,ac} +
(-1)^{\sigma(F,ac)} r_{ab,bc}. 
\]
\end{enumerate}
These relations generate all the relations of $K/K_0$ and we call them relations of type (1) to (5) respectively.
\end{remark}

\begin{definition} \label{defGensHomK}
Let $I$ be the edge ideal of a graph $G$ and let $ab$ be an edge of $G$.
Let $L_a$ (resp. $L_b$) be a subset of $N(a)\backslash\{b\}$ (resp. $N(b)\backslash\{a\}$) and $\overline{L_a}$ (resp. $\overline{L_b}$) be its complement.
We shall choose $L_a$ and $L_b$ such that for any vertex $z\in N(a)\cap N(b)$ we have $z\in L_a$ if and only if $z\in L_b$. 
We define
\[
\Delta^a = \{x\in \overline{L_a}~ |~ \exists ~ y\in L_b \text{ s.t. } xy \notin I \text{ or } \exists y \in L_a \text{ s.t. } xy \notin I\}
\]
and similarly 
\[
\Delta^b = \{x\in \overline{L_b}~ |~ \exists ~ y\in L_a \text{ s.t. } xy \notin I \text{ or } \exists y \in L_b \text{ s.t. } xy \notin I\}.
\]
Let $\Delta = \Delta^a \cup \Delta^b$.
We define homomorphisms in $\HomK$ without making any further choices.

For any $x\in \Delta$ let $\Delta_x$ to be the set $N(x)\backslash\{a,b\}$.
Now define 
\[
\Delta_{L_a,L_b} = \{\sqrt{m} ~|~ m\in \prod_{x\in \Delta} \Delta_x\}.
\]

The generators of $K/K_0$ are in degree 3. Now for any $\lambda \in \Delta_{L_a,L_b}$ define a $\kk$-linear map
\[
\phi^\lambda_{L_a,L_b} : (K/K_0)_3 \longrightarrow R/I
\]
by
\[
\phi^\lambda_{L_a,L_b}(r_{e,e'}) = 
\begin{cases}
(-1)^{\sigma(\{ab,ax\},ax)} \lambda x & e=ab, e'=ax \text{ and } x\in L_a\\
(-1)^{\sigma(\{ab,bx\},bx)} \lambda x & e=ab, e'=bx \text{ and } x\in L_b\\
0 & \text{otherwise}.
\end{cases}
\]
\end{definition}

\begin{lemma}
For $L_a$ and $L_b$ as above and for any $\lambda\in \Delta_{L_a,L_b}$, $\phi^\lambda_{L_a,L_b}$ algebraically extends to a well-defined homomorphism in $\HomK$.
\end{lemma}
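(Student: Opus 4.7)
My plan is to verify that $\phi = \phi^\lambda_{L_a,L_b}$ sends every relation among the generators of $K/K_0$ to zero in $R/I$; since the generators of $K/K_0$ live in degree $3$ and $\phi$ is $\kk$-linear by definition, this is exactly what is required for $\phi$ to extend $R$-linearly. By Remark \ref{remrel}, the relations are of five types, and I treat each in turn.

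Types (1) and (2) are immediate. A Type (1) relation $v\cdot r_{e,e'}$, with $v$ the common vertex of $e$ and $e'$, maps to $v\cdot\phi(r_{e,e'})$; when this is nonzero we have $\phi(r_{e,e'})=\pm\lambda x$ for some vertex $x$ lying on an edge through $v$, so $v\lambda x=\lambda(vx)\in I$. A Type (2) relation is $de\cdot r_{e',e''}$ with $de$ itself a generator of $I$, so $de\cdot\phi(r_{e',e''})\in I$ automatically.

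For Types (3)--(5) I would argue by case analysis, driven by how many of the three generators appearing in the relation have the fixed edge $ab$ as one of their two edges (which is the only way $\phi$ is nonzero on them). If none does, the relation maps to zero. If exactly one does, the image is a single monomial $\pm uv\lambda$, and I claim this lies in $I$: either $uv$ is an edge directly (for instance, in Type (3) the ``third'' edge of the path contributes exactly such a factor $uv$), or the problematic vertex belongs to $\Delta^a\cup\Delta^b=\Delta$, in which case by the construction of $\Delta_{L_a,L_b}$ the monomial $\lambda$ contains one of its neighbors, and $uv\lambda$ falls into $I$ via that edge. If two generators are simultaneously nonzero, the two resulting monomials have the same underlying product of vertices; I would confirm that the signs coming from the factors $(-1)^{\sigma(\cdot)}$ in the definition of $\phi$ combine with the Taylor signs from Remark \ref{remrel} so that the two terms cancel. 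The compatibility hypothesis $L_a\cap N(b)=L_b\cap N(a)$ is used in Type (5) to guarantee that the two generators $r_{ab,az}$ and $r_{ab,bz}$ of a triangle $abz$ are simultaneously nonzero or simultaneously zero, without which the triangle relation would fail.

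The most delicate step will be Type (3) when $ab$ is the middle edge of the path, say $\alpha-a-b-\delta$, since then both of the non-Koszul generators $r_{\alpha a,ab}$ and $r_{ab,b\delta}$ can be nonzero, and cancellation rests on matching the $\sigma$-signs in the definition of $\phi$ with the coefficients $(-1)^{\sigma(F,\cdot)}u_F/u_{F\setminus\{\cdot\}}$ from Remark \ref{remrel}. Once this sign bookkeeping is set out, the remaining verifications are short, and the precise conditions defining $\Delta^a$ and $\Delta^b$ turn out to be exactly what is needed in each ``one-nonzero'' subcase, confirming that the definition of $\Delta_{L_a,L_b}$ is optimal.
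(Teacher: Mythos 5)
Your proposal is correct and follows essentially the same route as the paper's proof: reduce to the five relation types of Remark \ref{remrel}, dismiss types (1)--(2) at once, and for types (3)--(5) split according to which generators involving $ab$ map to something nonzero, using the defining property of $\Delta^a,\Delta^b$ for the one-term cases and sign cancellation (plus the compatibility of $L_a$ and $L_b$ on $N(a)\cap N(b)$ for the triangle relations) for the two-term cases. The only step left implicit --- the parity check on the $\sigma$-signs --- is equally left as a routine verification in the paper itself.
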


\begin{proof}
Let $\phi=\phi^\lambda_{L_a,L_b}$.
The only generators of $K/K_0$ that are mapped to something possibly nonzero are the generators that involve the edge $ab$.
We show that $\phi$ satisfies all the relations involving such generators.
The type (1) and (2) relations are obviously satisfied.
Now without loss of generality consider a generator $r_{ab,ax}$ for $x\in L_a$.
\begin{itemize}
\item Type (3) relations. Let $F=\{ab,ax,xy\}$. A relation $(-1)^{\sigma(F,ab)}b r_{ax,xy} + (-1)^{\sigma(F,xy)}y r_{ab,ax}$ implies $(-1)^{\sigma(F,ab)} b \phi(r_{ax,xy}) +(-1)^{\sigma(F,xy)} y \phi(r_{ab,ax}) = \pm y (\lambda x) = 0$. Now let $F=\{ax,ab,by\}$. For a relation of form $(-1)^{\sigma(F,by)} y r_{ab,ax} + (-1)^{\sigma(F,ax)} x r_{ab,by}$ there are 3 possibilities. If $y\in L_b$ then $(-1)^{\sigma(F,by)}y \phi(r_{ab,ax}) + (-1)^{\sigma(F,ax)} x \phi(r_{ab,by}) =
(-1)^{\sigma(F,by)+\sigma(\{ab,ax\},ax)} y(\lambda x) +
(-1)^{\sigma(F,ax)+\sigma(\{ab,by\},by)}  x(\lambda y) =0$.
It is not hard to show that
$\sigma(F,by)+\sigma(\{ab,ax\},ax)$ and $\sigma(F,ax)+\sigma(\{ab,by\},by)$ have opposite parities.
If $y\in \Delta^b$ then  $(-1)^{\sigma(F,by)}y \phi(r_{ab,ax}) + (-1)^{\sigma(F,ax)} x \phi(r_{ab,by}) =
\pm y(\lambda x) =0$ since there is a vertex $z$ adjacent to $y$ such that $z|\lambda$.
Suppose $y$ does not lie in $L_b \cup \Delta^b$. By definition of $\Delta^b$, $y$ is adjacent to $x$, hence $(-1)^{\sigma(F,by)}y \phi(r_{ab,ax}) + (-1)^{\sigma(F,ax)} x \phi(r_{ab,by}) =\pm y(\lambda x) =0$.
\item Type (4) relations. Let $F=\{ab,ax,ay\}$. Consider a relation
\[(-1)^{\sigma(F,ay)} y r_{ab,ax} +(-1)^{\sigma(F,ax)} x r_{ab,ay} + (-1)^{\sigma(F,ab)} b r_{ax,ay}.\]
We have 
\begin{align*}
& (-1)^{\sigma(F,ay)} y \phi(r_{ab,ax}) +(-1)^{\sigma(F,ax)} x \phi(r_{ab,ay}) +  (-1)^{\sigma(F,ab)} b \phi(r_{ax,ay}) =\\
& (-1)^{\sigma(F,ay)+\sigma(\{ab,ax\},ax)} y(\lambda x) + (-1)^{\sigma(F,ax)} x \phi(r_{ab,ay}).
\end{align*}
If $y\in L_a$ then obviously $\phi$ satisfies the relation since 
$\sigma(F,ay)+\sigma(\{ab,ax\},ax)$ and $\sigma(F,ax)+\sigma(\{ab,ay\},ay)$ has opposite parities. If $y\in \Delta^{a}$ then $(-1)^{\sigma(F,ay)+\sigma(\{ab,ax\},ax)} y(\lambda x) + (-1)^{\sigma(F,ax)} x \phi(r_{ab,ay}) = (-1)^{\sigma(F,ay)+\sigma(\{ab,ax\},ax)} y(\lambda x) =0$ since $y\lambda$ lies in $I$.
If $y \notin L_a \cup \Delta^a$ then $y$ is adjacent to $x$ and $\phi$ also satisfies the relation in this case.
\item Type (5) relations. Let $F=\{ab,ax,bx\}$. A relation $(-1)^{\sigma(F,bx)} r_{ab,ax} +(-1)^{\sigma(F,ab)} r_{ax,bx} + (-1)^{\sigma(F,ax)} r_{ab,bx}$ implies $(-1)^{\sigma(F,bx)} \phi(r_{ab,ax}) +(-1)^{\sigma(F,ab)} \phi(r_{ax,bx}) + (-1)^{\sigma(F,ax)} \phi(r_{ab,bx}) =
(-1)^{\sigma(F,bx)+\sigma(\{ab,ax\},ax)} x\phi(r_{ab,ax}) + (-1)^{\sigma(F,ax)+\sigma(\{ab,bx\},bx)} x\phi(r_{ab,bx}) =0$ since $x \in L_a$ if and only if $x\in L_b$ and
$\sigma(F,bx)+\sigma(\{ab,ax\},ax)$ and $\sigma(F,ax)+\sigma(\{ab,bx\},bx)$ have opposite parities.
\end{itemize}
For generators $r_{ab,bx}$ when $x$ is in $L_b$, proof is similar.
\end{proof}

The following lemma shows when the homomorphisms $\phi^\lambda_{L_a,L_b}$ for $\lambda\in \Delta_{L_a,L_b}$ does not lie in the image of $\Phi$.

\begin{lemma} \label{rem-nonvangensT2}
For an edge $ab$ of $G$, let $L_a$ and $L_b$ be chosen as above.
Let $\lambda$ be a monomial in $\Delta_{L_a,L_b}$.
If $\phi^\lambda_{L_a,L_b}$ is nonzero and it is not equal to the homomorphism $\lambda \phi_{ab}$ then $\phi^\lambda_{L_a,L_b}$ is a nonzero element in $T^2(R/I(G))$.
\end{lemma}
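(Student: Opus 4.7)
The plan is to exploit the natural $\ZZ^n$-multigrading on the modules involved. Both $\Hom_R(R^m,R/I)$ and $\HomK$ carry multigradings, and the map $\Phi$ is multihomogeneous of degree zero. A direct inspection shows that $\phi^\lambda_{L_a,L_b}$ is multihomogeneous of multidegree $\lambda-ab$: each generator $r_{ab,ax}$ sits in multidegree $abx$ and is mapped to $\pm\lambda x$ (multidegree $\lambda x$), and similarly for $r_{ab,bx}$. Moreover, by definition of $\Delta_{L_a,L_b}$, the monomial $\lambda$ is a squarefree product of vertices drawn from $\bigcup_{x\in\Delta}\Delta_x\subseteq V(G)\setminus\{a,b\}$, so neither $a$ nor $b$ divides $\lambda$.

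Now assume for contradiction that $\phi^\lambda_{L_a,L_b}=\Phi(\psi)$ for some $\psi\in\Hom_R(R^m,R/I)$. Replacing $\psi$ by its multihomogeneous component of degree $\lambda-ab$, I may assume $\psi$ is homogeneous of that degree. Setting $r_e=\psi(\epsilon_e)\in R/I$, each $r_e$ lies in the graded piece $(R/I)_{e+\lambda-ab}$. The central observation is that this piece vanishes unless the multidegree $e+\lambda-ab$ is non-negative in every coordinate, which is equivalent to $ab\mid e\lambda$. Since $a,b\nmid\lambda$ and $e=xy$ is a product of two distinct vertices ($G$ is simple in this section), $ab\mid e\lambda$ forces $\{x,y\}=\{a,b\}$, i.e.\ $e=ab$.

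Consequently $r_e=0$ for every edge $e\neq ab$, and $r_{ab}$ lies in $(R/I)_\lambda$. Note that $\lambda\notin I$, for otherwise $\phi^\lambda_{L_a,L_b}$ would vanish identically, contrary to hypothesis. Thus $(R/I)_\lambda=\kk\cdot\lambda$ is one-dimensional, so $r_{ab}=c\lambda$ for some $c\in\kk$, and $\Phi(\psi)=c\lambda\,\phi_{ab}$. Since $\phi^\lambda_{L_a,L_b}$ is nonzero, there exists $x\in L_a\cup L_b$ with $\lambda x\notin I$; evaluating the equality $\phi^\lambda_{L_a,L_b}=c\lambda\,\phi_{ab}$ on the corresponding generator $r_{ab,ax}$ or $r_{ab,bx}$ yields $\pm\lambda x=\pm c\lambda x$ in $R/I$ with the same sign $(-1)^\sigma$ on both sides, forcing $c=1$. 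But then $\phi^\lambda_{L_a,L_b}=\lambda\,\phi_{ab}$, contradicting the hypothesis.

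The most delicate ingredient is the multidegree bookkeeping in the second paragraph: the verification that $\lambda$ involves neither $a$ nor $b$, together with the fact that one may pass to a multihomogeneous preimage under $\Phi$. Once those are in hand, the rigidity of $r_e$ kills all components except the one at $ab$, and the remaining argument is essentially mechanical from the squarefree multigraded structure of $R/I$.
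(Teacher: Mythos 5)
Your proof is correct, and it takes a genuinely different route from the paper's. The paper argues directly: it writes a hypothetical preimage as $\sum_{xy} r_{xy}\phi_{xy}$ with unknown coefficients $r_{xy}\in R/I$, evaluates on a generator $r_{ab,ax}$ with $x\in L_a$ and $\lambda x\notin I$ to force $r_{ab}=\lambda+r$, then uses the hypothesis $\phi^\lambda_{L_a,L_b}\neq\lambda\phi_{ab}$ to locate a witness $y\in\overline{L_a}$ (or $\overline{L_b}$) with $\lambda y\notin I$ where the two maps disagree, and derives the contradiction $a\mid\lambda$ from the vanishing of $\phi^\lambda_{L_a,L_b}(r_{ab,ay})$. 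You instead exploit the $\ZZ^n$-grading: since $\Phi$ is multihomogeneous of degree zero and $\phi^\lambda_{L_a,L_b}$ is concentrated in multidegree $\lambda-ab$, any preimage may be taken homogeneous of that degree, and then the degree bookkeeping (using $a,b\nmid\lambda$ and simplicity of $G$, both of which you correctly verify and which the paper also uses implicitly) kills every component $r_e$ with $e\neq ab$ and forces $r_{ab}\in\kk\cdot\lambda$; the scalar is then pinned to $1$ by evaluating at a single generator where $\phi^\lambda_{L_a,L_b}$ is nonzero. Your approach buys a stronger structural conclusion — any preimage of $\phi^\lambda_{L_a,L_b}$ under $\Phi$ must be a scalar multiple of $\lambda\epsilon_{ab}$ — and avoids the cancellation analysis and the need to produce the witness $y$; the paper's argument is more elementary in that it never invokes the grading on $\Hom_A(K/K_0,A)$, but it is correspondingly more delicate. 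Both proofs depend on the standing assumption of this section that $G$ is simple, which your argument uses at the step $ab\mid e\lambda\Rightarrow e=ab$.
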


\begin{proof}
Suppose $\phi^\lambda_{L_a,L_b} = \sum_{xy\in I(G)} r_{xy}\phi_{xy}$.
At least one of $L_a$ or $L_b$ is nonempty.
Now without loss of generality suppose $L_a\neq \emptyset$, then for $x\in L_a$ for which $\lambda x\notin I$ we have
\[
\phi^\lambda_{L_a,L_b}(r_{ab,ax}) = (-1)^{\sigma(\{ab,ax\},ax)} r_{ab} x + (-1)^{\sigma(\{ab,ax\},ab)} r_{ax} b = (-1)^{\sigma(\{ab,ax\},ax)} \lambda x.\]
Since $b \nmid \lambda$, $r_{ab} = \lambda + r$ such that
$(-1)^{\sigma(\{ab,ax\},ax)} r x+ (-1)^{\sigma(\{ab,ax\},ab)} r_{ax} b=0$.

Since $\phi^\lambda_{L_a,L_b} \neq \lambda \phi_{ab}$, there exists either an element $y\in \overline{L_a}$ with $\lambda y \notin I$ such that $\phi^\lambda_{L_a,L_b}(r_{ab,ay}) \neq \lambda \phi_{ab} (r_{ab,ay})$ or an element $y\in \overline{L_b}$ with $\lambda y \notin I$ such that $\phi^\lambda_{L_a,L_b}(r_{ab,by}) \neq \lambda \phi_{ab} (r_{ab,by})$.
If $y\in \overline{L_a}$ then
\[
\phi^\lambda_{L_a,L_b}(r_{ab,ay}) = (-1)^{\sigma(\{ab,ay\},ay)} (\lambda+r) y + (-1)^{\sigma(\{ab,ay\},ab)} r_{ay} a =0,
\]
in $R/I$.
It follows that $a|\lambda$ which is in contradiction with definition of $\Delta_{L_a,L_b}$.
If $y \in \overline{L_b}$ then
\[
\phi^\lambda_{L_a,L_b}(r_{ab,by}) = (-1)^{\sigma(\{ab,by\},by)} (\lambda+r) y + (-1)^{\sigma(\{ab,by\},ab)} r_{by} b =0,
\]
in $R/I$. Similar to above this also results in a contradiction.
\end{proof}

Before giving the main result of this section in Theorem \ref{thmMainT2}, we develop some lemmata.

\begin{lemma}
\label{TwoHeadVar}
Let $\phi \in Hom_R(K/K_0,R/I)$. For a generator $r_{ab,bc}$ of $K/K_0$ suppose $\phi(r_{ab,bc})$ is written as a linear combination of monomials in $R$. If $m$ is such monomial then either $a|m$ or $c|m$.
\end{lemma}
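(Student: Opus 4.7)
The plan is to argue by contradiction. Let $m$ be a monomial occurring in the canonical reduced representative of $\phi(r_{ab,bc})\in R/I$, so $m\notin I$, and suppose for contradiction that $a\nmid m$ and $c\nmid m$. I would use only two of the five types of relations from Remark~\ref{remrel}, namely Type~(1) and Type~(4), to force $m\in I$. Since the argument concerns only which monomials can appear, I would suppress all the sign factors $(-1)^{\sigma(F,-)}$.

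The first step is to apply the Type~(1) relation $b\,r_{ab,bc}=0$ in $K/K_0$: this immediately gives $b\,\phi(r_{ab,bc})=0$ in $R/I$, hence $bm\in I$. Since $I$ is a square-free monomial ideal generated by the edges of the simple graph $G$ and $m\notin I$, the monomial $bm$ can only lie in $I$ via an edge of the form $by_0$, where $y_0$ is a neighbor of $b$ and $y_0\mid m$. If $y_0\in\{a,c\}$ the lemma holds, so I may assume $y_0\neq a,c$ and seek a contradiction.

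The second step is to apply the Type~(4) relation to the three edges $ab,\,bc,\,by_0$ meeting at the vertex $b$. Up to signs this relation reads
\[
\pm a\,r_{bc,by_0}\;\pm\;c\,r_{ab,by_0}\;\pm\;y_0\,r_{ab,bc}\;=\;0\quad\text{in }K/K_0,
\]
so applying $\phi$ and isolating the last term shows that $y_0\,\phi(r_{ab,bc})$ lies in the ideal $(a,c)$ of $R/I$. Its reduced form is therefore a $\kk$-linear combination of monomials each divisible by $a$ or by $c$, whereas $y_0 m$ is divisible by neither (as $y_0\neq a,c$ and $a,c\nmid m$). Hence $y_0 m$ cannot survive modulo $I$, and $y_0 m\in I$. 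Writing $m=y_0 m'$, this gives $y_0^{\,2}m'\in I$; since $G$ is simple, $y_0$ carries no loop, so any edge dividing $y_0^{\,2}m'$ uses $y_0$ at most once. Such an edge either avoids $y_0$ entirely, hence divides $m'$ and forces $m\in I$, or it has the form $y_0 v$ with $v\neq y_0$, forcing $v\mid m'$ and thus $y_0 v\mid m$, again $m\in I$. Either alternative contradicts $m\notin I$.

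The main point requiring care is verifying the Type~(4) identity in $K/K_0$: the Taylor relation among $\{ab,\,bc,\,by_0\}$ involves a middle Koszul term that must be absorbed into $K_0$, leaving only the three displayed terms; this is precisely the content of Remark~\ref{remrel}(4), so no additional work is needed. The simplicity of $G$ enters only at the very end, to exclude a loop at $y_0$, which matches the standing assumption of Section~\ref{T2} and explains why the lemma can fail when $G$ has loops.
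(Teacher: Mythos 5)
Your proof is correct and follows essentially the same route as the paper: the type (1) relation $b\,r_{ab,bc}$ produces a neighbor $y_0$ of $b$ dividing $m$, and the type (4) relation at $b$ on the edges $ab,\,bc,\,by_0$ then forces either $a\mid m$, $c\mid m$, or the contradiction $m\in I$. The only cosmetic difference is that you phrase the type (4) step as membership of $y_0\,\phi(r_{ab,bc})$ in the ideal $(a,c)$ of $R/I$, whereas the paper argues directly about which terms can cancel.
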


\begin{proof}
The relation $br_{ab,bc}$ implies that $b\phi(r_{ab,bc}) =0$ in $R/I$. Therefore $m$ contains a vertex $d$ adjacent to $b$. If $d$ is distinct from $a$ and $c$, then we have the type (4) relation
\[
(-1)^{\sigma(F,ab)}ar_{bc,bd} +(-1)^{\sigma(F,bc)} c r_{bd,ab} +(-1)^{\sigma(F,bd)} d r_{ab,bc}=0
\]
If $dm$ cancels out by a term in $(-1)^{\sigma(F,ab)}a\phi(r_{bc,bd}) +(-1)^{\sigma(F,bc)} c \phi(r_{bd,ab})$ then either $a$ or $c$ divides $m$. Otherwise $dm$ is in $I$ and this implies that $m$ also contains a vertex adjacent to $d$ which is a contradiction since we assumed that $m$ is nonzero in $R/I$.
\end{proof}

\begin{lemma} \label{lemVan-ac}
Let $\phi$ be a homomorphism in $\HomK$.
Let $r_{ab,bc}$ be a generator of $K/K_0$ such that $ab$ does not lie on any 3-cycle.
If $\phi(r_{ab,bc})$ contains a term of form $racm$ for a monomial $m\in R$ and scalar $r\in \kk$ then modulo $\im \Phi$ we can eliminate such term.
More precisely, there is a homomorphism $\psi$ such that modulo $\im \Phi$, $\psi=\phi$ and for all generators $r_{e,e'}$ of $K/K_0$, $\Mon(\psi(r_{e,e'})) \subseteq \Mon(\phi(r_{e,e'}))$ and $\psi(r_{ab,bc})$ does not have the term $racm$.
\end{lemma}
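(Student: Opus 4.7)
The plan is to define $\psi$ as $\phi$ minus an $R$-multiple of a single generator of $\im \Phi$. Since $\phi_{ab}(r_{ab,bc}) = \pm c$, the natural choice is
\[
\psi := \phi - \epsilon\,(ram)\,\phi_{ab}
\]
with $\epsilon \in \{\pm 1\}$ chosen so that the targeted term $racm$ in $\phi(r_{ab,bc})$ is precisely cancelled. By construction $\psi \equiv \phi \pmod{\im \Phi}$, so the nontrivial point is the containment $\Mon(\psi(r_{e,e'})) \subseteq \Mon(\phi(r_{e,e'}))$ on every generator of $K/K_0$.

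The modification only affects generators of the form $r_{ab,e'}$ with $e' \neq bc$ adjacent to $ab$, so I would split the verification into two cases according to the shape of $e'$. If $e' = ax$ with $ax$ an edge and $x \neq b$, the term added to $\psi(r_{ab,ax})$ is $\pm ramx$; because $ax \in I$ this term vanishes in $R/I$ and no new monomial appears. If $e' = bx$ with $bx$ an edge and $x \neq c$, the added term is again $\pm ramx$, and here I would apply $\phi$ to the type (4) relation on $F = \{ab,bc,bx\}$, namely
\[
(-1)^{\sigma(F,ab)} a\,\phi(r_{bc,bx}) + (-1)^{\sigma(F,bc)} c\,\phi(r_{ab,bx}) + (-1)^{\sigma(F,bx)} x\,\phi(r_{ab,bc}) = 0
\]
in $R/I$, and compare the coefficient of the monomial $acmx$ on both sides. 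Lemma \ref{TwoHeadVar} restricts the monomials in $\phi(r_{bc,bx})$ to those divisible by $c$ or $x$, and the monomials in $\phi(r_{ab,bx})$ to those divisible by $a$ or $x$. Together with the assumption that $ab$ lies on no 3-cycle (which gives $ac \notin I$ and $ax \notin I$), this should force the potentially new monomial $ramx$ either to lie in $I$ or to already appear in $\phi(r_{ab,bx})$, preserving the containment in either case.

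The hard part will be the second case, particularly the subcase where $cx \in E(G)$, so that $bcx$ is a 3-cycle — a configuration the hypothesis does not exclude. In this subcase $racmx$ automatically vanishes in $R/I$, so the type (4) relation yields no direct constraint on $\phi(r_{ab,bx})$ and the cancellation argument needs supplementing, either by an auxiliary correction to $\psi$ (for instance, also adding a multiple of $\phi_{bc}$ to redistribute the effect onto $r_{bc,e'}$-generators) or by invoking another relation of $K/K_0$, most naturally a type (3) relation on a path $ab, bx, xy$ with $xy \in E$, to force $ramx \in I$. Carrying this out while keeping track of the signs $(-1)^{\sigma(F,\cdot)}$ is where the calculation becomes delicate, but the no-3-cycle-through-$ab$ hypothesis is exactly what rules out the pathological alternatives and should make the bookkeeping close out.
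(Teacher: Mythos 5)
Your opening move is the same as the paper's, but the correction you subtract is too small, and the verification you sketch for the generators $r_{ab,bx}$ does not close. With $\psi=\phi-\epsilon\,(ram)\,\phi_{ab}$ the new term $\pm ramx$ appears in $\psi(r_{ab,bx})$ for every $x\in N(b)\setminus\{a,c\}$, and your type (4) relation on $F=\{ab,bc,bx\}$ only forces $axm\in I$ or $axm\in\Mon(\phi(r_{ab,bx}))$ when the monomial $racxm$ is nonzero in $R/I$ \emph{and} is forced to cancel against $c\,\phi(r_{ab,bx})$. Neither is guaranteed: $acxm$ can lie in $I$ even though $axm$ does not (e.g.\ when $cx\in E(G)$ --- the $3$-cycle $bcx$ is not excluded by the hypothesis, which only concerns $3$-cycles through $ab$ --- or when $m$ contains a vertex adjacent to $x$), and even when $acxm\notin I$ the cancellation can occur against a term $cxm$ of $a\,\phi(r_{bc,bx})$, telling you nothing about $\phi(r_{ab,bx})$. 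In either situation $axm$ is a genuinely new monomial at $r_{ab,bx}$ and the containment $\Mon(\psi(r_{e,e'}))\subseteq\Mon(\phi(r_{e,e'}))$ fails. Your proposed repair --- adding a multiple of $\phi_{bc}$ --- cannot fix this, since $\phi_{bc}(r_{ab,bx})=0$; it does not touch the offending generator.

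The paper's proof handles exactly this obstruction by enlarging the correction: let $L$ be the set of $x\in N(b)\setminus\{a\}$ for which $axm\notin I$ and $axm$ already occurs in $\phi(r_{ab,bx})$, let $\overline{L}$ be its complement, and take
\[
\psi=\pm\Bigl(ram\,\phi_{ab}+\sum_{x\in\overline{L}}rxm\,\phi_{bx}\Bigr).
\]
Since $\phi_{bx}(r_{ab,bx})=\pm a$, the extra summands cancel precisely the unwanted new terms $\pm ramx$ at those $r_{ab,bx}$ where they would be new, while for $x\in L$ the term is absorbed into an existing monomial. The price is that one must now check the extra summands are harmless elsewhere; the only delicate generators are $r_{bx,by}$ with $x\in\overline{L}$, $y\in L$, and it is \emph{there} (not at $r_{ab,bx}$) that the hypothesis that $ab$ lies on no $3$-cycle is used, via the type (4) relation on $F=\{ab,bx,by\}$, to show $xym\in I$ or $xym\in\Mon(\phi(r_{bx,by}))$. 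So the gap in your write-up is the missing $\sum_{x\in\overline{L}}rxm\,\phi_{bx}$ correction and the ensuing (and genuinely different) case analysis.
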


\begin{proof}
Consider a nonzero term $racm$ in which $m$ is a monomial and $r$ is a scalar.
Let $L$ be the set of vertices $x$ in $N(b)\backslash \{a\}$ such that $axm \notin I$ and $axm$ appears with a nonzero coefficient in $\phi(r_{ab,bx})$ and let $\overline{L}$ be its complement in $N(b)\backslash\{a\}$.
Let $\psi = r am \phi_{ab} + \sum_{x\in \overline{L}} rxm \phi_{bx}$ if $bc \prec ab$ and let $\psi = - r am \phi_{ab} - \sum_{x\in \overline{L}} rxm \phi_{bx}$ if $ab\prec bc$.
Note that $\psi(r_{ab,bc}) = racm$.
Therefore $\phi-\psi$ eliminates the term $racm$.
We show that for any generator $r_{e,e'}$ of $K/K_0$, either $\psi(r_{e,e'})$ is zero or $\phi(r_{e,e'})$ contains the same monomials as $\psi(r_{e,e'})$ but possibly with different coefficients.
That is for any two adjacent edges $e,e'$ and generator $r_{e,e'}$ of $K/K_0$
\[
\Mon((\phi-\psi)(r_{e,e'})) \subseteq \Mon(\phi(r_{e,e'})).
\]

The only generators of $K/K_0$ that are mapped to something nonzero are the generators that contain $ab$ or $bx$ for $x \in \overline{L}$.
For any $x\in L$, $\psi(r_{ab,bx}) = \pm raxm$ and $\phi$ has a nonzero term $r'axm$ by definition of $L$.
Therefore in $(\phi - \psi)(r_{ab,bx})$ either the monomial $axm$ vanishes or it remains with a different coefficient.
For $x\in \overline{L}$, we have $\psi(r_{ab,bx})= \pm((-1)^{\sigma(\{ab,bx\},bx)} raxm + (-1)^{\sigma(\{ab,bx\},ab)} raxm) = 0$.
For a generator of the form $r_{ab,ay}$ we have $\psi(r_{ab,ay}) = \pm ram(y) =0$ in $R/I$.
Also for a generator $r_{bx,xy}$ with $x\in \overline{L}$ we have $\psi(r_{bx,xy}) = \pm rxym=0$ in $R/I$.
Now consider a generator of form $r_{bx,by}$.
If $x,y$ are in $\overline{L}$ then $\psi(r_{bx,by}) = \pm ( (-1)^{\sigma(\{bx,by\},by)} x(ym) + (-1)^{\sigma(\{bx,by\},bx)} y(xm)) =0$.
If $x\in \overline{L}$ and $y\in L$ then $\psi(r_{bx,by}) = \pm( (-1)^{\sigma(\{bx,by\},by)} rxym)$.
Suppose $xym \notin I$ then for $F=\{ab,bx,by\}$ consider the type (4) relation below
\[
(-1)^{\sigma(F,ab)} a \phi(r_{bx,by}) + (-1)^{\sigma(F,bx)} x \phi(r_{ab,by}) + (-1)^{\sigma(F,by)} y \phi(r_{ab,bx}).
\]
Since $ab$ does not lie on any 3 cycles $axy\notin I$. Hence $axym \notin I$.
Suppose $\phi(r_{ab,by})$ has the term $\pm r'aym$.
If the monomial $xym$ does not appear in $\phi(r_{bx,by})$ with a nonzero coefficient then $r'axym$ should cancel with a monomial in $y \phi(r_{ab,bx})$.
Hence $\phi(r_{ab,bx})$ has the nonzero term $\pm r'axm$ which is against our assumption that $x\notin L$.
This completes the proof.
\end{proof}

\begin{theorem} \label{thmMainT2}
Let $G$ be a graph with no 3-cycles and let $I$ be its edge ideal.
The second cotangent cohomology module $T^2(R/I)$ vanishes if and only if for any edge $ab$ of $G$ and any $L_a$ and $L_b$ as in \ref{defGensHomK} we have
\begin{equation} \label{eqT2van}
\prod_{x\in \Delta} \Delta_x \times ((N(a)\cup N(b)) \backslash (\{a,b\} \cup L_a \cup L_b \cup \Delta) \subseteq I.
\end{equation}
\end{theorem}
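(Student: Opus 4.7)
The proof rests on a single structural observation: condition \ref{eqT2van} is equivalent to the assertion that each homomorphism $\phi^\lambda_{L_a,L_b}$ coincides with $\lambda\phi_{ab}$ in $\HomK$. The two maps automatically agree on generators $r_{ab,ax}$ with $x\in L_a$ and $r_{ab,bx}$ with $x\in L_b$; they disagree on $r_{ab,ax}$ or $r_{ab,bx}$ for $x\in\overline{L_a}\cup\overline{L_b}$ only through the term $\pm\lambda x$ produced by $\lambda\phi_{ab}$. For $x\in\Delta$ the relation $\lambda x\in I$ holds automatically by the construction of $\Delta_{L_a,L_b}$, so the residual requirement is precisely $\lambda x\in I$ for $x\in (N(a)\cup N(b))\setminus(\{a,b\}\cup L_a\cup L_b\cup\Delta)$, i.e.\ condition \ref{eqT2van}. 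Under this hypothesis every such $\phi^\lambda_{L_a,L_b}$ therefore lies in $\im\Phi$.

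For the \emph{only if} direction I argue contrapositively. Suppose \ref{eqT2van} fails, yielding a witness $(ab,L_a,L_b,\lambda,y)$ with $\lambda y\notin I$ and $y\in\overline{L_a}\cup\overline{L_b}$. Then $\phi^\lambda_{L_a,L_b}\ne\lambda\phi_{ab}$ as the two maps disagree on $r_{ab,ay}$ or $r_{ab,by}$. If $\phi^\lambda_{L_a,L_b}$ is also nonzero, Lemma \ref{rem-nonvangensT2} immediately produces a nontrivial class in $T^2(R/I)$. Otherwise $\lambda x\in I$ for every $x\in L_a\cup L_b$, and one enlarges $(L_a,L_b)$ to $(L_a\cup(\{y\}\cap N(a)),\,L_b\cup(\{y\}\cap N(b)))$, which preserves the compatibility condition on $N(a)\cap N(b)$; after possibly multiplying $\lambda$ by a square-free factor supplying witnesses for any new elements of $\Delta$, while keeping $\lambda y\notin I$, the enlarged $\phi^\lambda_{L_a,L_b}$ is nonzero and still differs from $\lambda\phi_{ab}$, so Lemma \ref{rem-nonvangensT2} again furnishes a nontrivial class.

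For the \emph{if} direction I would establish a second-order analogue of Theorem \ref{thm-Hom-gens}: modulo $\im\Phi$, every $\phi\in\HomK$ is an $R/I$-linear combination of the $\phi^\lambda_{L_a,L_b}$'s. Given a generator $r_{ab,bc}$, Lemma \ref{TwoHeadVar} forces every monomial of $\phi(r_{ab,bc})$ to be divisible by $a$ or $c$. The no-$3$-cycle hypothesis on $G$ gives $ac\notin I$, so Lemma \ref{lemVan-ac} eliminates every term divisible by $ac$ modulo $\im\Phi$. A surviving term $rcm$ with $a\nmid m$ is then absorbed by subtracting a suitable $\phi^\lambda_{L_a,L_b}$ associated to the edge $ab$ with $c\in L_b$: one defines $L_a$ and $L_b$ from those $x\in N(a)\setminus\{b\}$ (respectively $N(b)\setminus\{a\}$) contributing monomials of the form $xm$ in $\phi(r_{ab,ax})$ (respectively $\phi(r_{ab,bx})$), and constructs $\lambda\in\Delta_{L_a,L_b}$ as a square-free lcm of witnesses drawn from the sets $\Delta_x$, in direct analogy with the proof of Theorem \ref{thm-Hom-gens}. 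Surviving terms $ram$ are handled symmetrically by $\phi^\lambda_{L_b,L_c}$ attached to the edge $bc$. Combined with the structural observation above, every $\phi$ lies in $\im\Phi$, and $T^2(R/I)=0$.

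The main technical obstacle is this reduction step: one must verify that each absorbing $\phi^\lambda_{L_a,L_b}$ does not reintroduce unwanted monomials into $\phi(r_{e,e'})$ for any other generator, which requires simultaneous bookkeeping through all five relation types enumerated in Remark \ref{remrel}. The no-$3$-cycle hypothesis is essential precisely at the step $ac\notin I$ needed to invoke Lemma \ref{lemVan-ac}: if $a,b,c$ span a $3$-cycle then $ac\in I$, terms divisible by $ac$ become trapped in $\phi(r_{ab,bc})$, and a purely combinatorial statement of the shape \ref{eqT2van} no longer captures vanishing of $T^2$.
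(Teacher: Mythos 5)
Your architecture matches the paper's: reduce an arbitrary $\phi\in\HomK$ modulo $\im\Phi$ using Lemmas \ref{TwoHeadVar} and \ref{lemVan-ac}, absorb the surviving terms $rcm$ by the homomorphisms $\phi^\lambda_{L_a,L_b}$ in analogy with Theorem \ref{thm-Hom-gens}, and then translate vanishing of $T^2$ into the identity $\phi^\lambda_{L_a,L_b}=\lambda\phi_{ab}$ via Lemma \ref{rem-nonvangensT2}, which is precisely condition \eqref{eqT2van}. But in the \emph{if} direction you stop short of the crux. Having defined $L_a$ and $L_b$, you propose to ``construct $\lambda\in\Delta_{L_a,L_b}$ as a square-free lcm of witnesses drawn from the sets $\Delta_x$'' --- the existence of those witnesses, i.e.\ the fact that for \emph{every} $x\in\Delta$ some vertex of $\Delta_x$ divides $m$, is the nontrivial combinatorial content of the whole argument, and you never establish it. The paper proves it by a four-way case analysis ($x\in\Delta^a$ or $x\in\Delta^b$, paired with a non-adjacent $y\in L_a$ or $y\in L_b$), playing the type (3) and type (4) relations of Remark \ref{remrel} against the defining property of $L_a$ and $L_b$; the no-3-cycle hypothesis is used again there (to exclude cancellations forcing $a\mid m$ or $b\mid m$), not only at the point you locate it in Lemma \ref{lemVan-ac}. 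The companion claim that subtracting $rk\phi^\lambda_{L_a,L_b}$ reintroduces no monomials into any $\phi(r_{e,e'})$ is explicitly deferred as ``the main technical obstacle.'' As written, this direction is a plan rather than a proof.

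In the \emph{only if} direction, the contrapositive via Lemma \ref{rem-nonvangensT2} is exactly the paper's route when $\phi^\lambda_{L_a,L_b}\neq 0$. Your repair of the degenerate case $\phi^\lambda_{L_a,L_b}=0$ --- enlarging $(L_a,L_b)$ by $y$ and ``multiplying $\lambda$ by a square-free factor supplying witnesses for any new elements of $\Delta$, while keeping $\lambda y\notin I$'' --- is not justified: adding $y$ to $L_a$ or $L_b$ changes $\Delta$ and hence $\Delta_{L_a,L_b}$ wholesale, and nothing guarantees that the required new factors can be chosen so that $\lambda y$ stays outside $I$. (The paper's own proof is silent on this case, so you are no worse off, but the step as stated does not go through and would need an actual argument.)
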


\begin{proof}
Firstly we show that as $ab$ varies in $E(G)$ the homomorphisms $\phi^\lambda_{L_a,L_b}$ form  a generating set for $T^2(R/I)$.
Let $N$ be a submodule of $\HomK$ generated by these homomorphisms.
Suppose for a generator $r_{ab,bc}$ with $ab\prec bc$ of $K/K_0$, $\phi(r_{ab,bc})$ contains a term of form $racm$ in which $m$ is a monomial and $r$ is a scalar. Then by Lemma \ref{lemVan-ac} we can eliminate such term modulo $\im \Phi$.
Now suppose $\phi(r_{ab,bc})$ contains a nonzero term of form $rcm$ for a monomial $m$ such that $a \nmid m$ and a scalar $r$.
Let
\[
L_a = \{x\in N(a)\backslash\{b\} ~|~ \phi(r_{ab,ax}) \text{ has the term $(-1)^{\sigma(\{ab,ax\},ax)+1} rxm$ and } xm\notin I\}
\]
and
\[
L_b = \{x\in N(b)\backslash\{a\} ~|~ \phi(r_{ab,bx}) \text{ has the term $(-1)^{\sigma(\{ab,bx\},bx)+1} rxm$ and } xm\notin I\}.
\]
We show that for some $\lambda\in\Delta_{L_a,L_b}$ and a monomial $k$, $(\phi - rk \phi^\lambda_{L_a,L_b})(r_{ab,bc})$ does not have the term $rcm$. Furthermore we show that for any generator $r_{e,e'}$ of $K/K_0$ we have
\[
\Mon((\phi - rk \phi^\lambda_{L_a,L_b})(r_{e,e'})) \subseteq
\Mon(\phi(r_{e,e'})) 
\]
By definition of $L_a$ for any $x\in L_a$, $\phi(r_{ab,ax})$ has the term $\pm rxm$. The same argument holds for $L_b$.
Therefore we only need to show that $\lambda | m$.
Choose a vertex $x$ in $\Delta$. First let $x\in \Delta^a$.
Suppose there is a vertex $y \in L_b$ such that $xy \notin I$.
For $F=\{ab,ax,by\}$ consider the type (3) relation
\[
(-1)^{\sigma(F,ax)} x\phi(r_{ab,by}) + (-1)^{\sigma(F,by)} y \phi(r_{ab,ax}).
\]
Since $ym$ and $xy$ does not belong to $I$, $xym \notin I$.
The term $(-1)^{\sigma(F,ax)+\sigma(\{ab,by\},by)+1} x(rym)$ either belongs to $I$ or it cancels with a term in $y\phi(r_{ab,ax})$. If it cancels out then $\phi(r_{ab,ax})$ has the term $(-1)^{\sigma(F,ax)+\sigma(\{ab,by\},by)+\sigma(F,by)} rxm$. The parity of $\sigma(F,ax)+\sigma(\{ab,by\},by)$ is the same as the parity of $\sigma(\{ab,ax\},ax)+1$. Thus $\phi(r_{ab,ax})$ has the term $(-1)^{\{ab,ax\},ax)+1} rxm$ which is against our assumption that $x \notin L_a$.
Now $xym \in I$ implies $xm\in I$ and this means that there is a vertex $z$ in $\Delta_x$ such that $z |m$.

Now suppose there is a vertex $y\in L_a$ such that $xy\notin I$.
Let $F=\{ab,ax,ay\}$. The type (4) relation $r_{ab,ax,ay}$ implies
\[
(-1)^{\sigma(F,ab)} b \phi(r_{ax,ay}) + (-1)^{\sigma(F,ax)} x\phi(r_{ab,ay})+
(-1)^{\sigma(F,ay)} y\phi(r_{ab,ax})=0.
\]
If $x(r'ym) \notin I$ and it cancels out by sum of a term in $(-1)^{\sigma(F,ab)} b \phi(r_{ax,ay})$ and a term in $(-1)^{\sigma(F,ay)} y\phi(r_{ab,ax})$ then $b|m$ which is a contradiction since we assumed that $cm$ is nonzero in $R/I$.
On the other hand it can not be canceled by a term in $(-1)^{\sigma(F,ay)} y\phi(r_{ab,ax})$ since $x\notin L_a$.
Thus $xym\in I$ and there is a vertex in $\Delta_x$ that divides $m$.

In case $x\in \Delta^b$ and there is a vertex $y\in L_a$ such that $xy\notin I$, the argument is similar to above.
Now suppose for $x\in \Delta^b$ there is a vertex $y\in L_b$ such that $xy\notin I$.
For $F=\{ab,bx,by\}$ the type (4) relation $r_{ab,bx,by}$ implies
\[
(-1)^{\sigma(F,ab)} a \phi(r_{bx,by}) + (-1)^{\sigma(F,bx)} x\phi(r_{ab,by})+
(-1)^{\sigma(F,by)} y\phi(r_{ab,bx})=0.
\]
If $x(r'ym) \notin I$ then it can not be canceled by sum of a term in $(-1)^{\sigma(F,ab)} b \phi(r_{bx,by})$ and a term in $(-1)^{\sigma(F,by)} y\phi(r_{ab,bx})$. Since otherwise $a|m$ which is against our assumption.
Similarly it can not be canceled by a term in $(-1)^{\sigma(F,by)} y\phi(r_{ab,bx})$. Hence also in this case $xym \in I$ and some vertex in $\Delta_x$ divides $m$.

Therefore modulo $N+ \im \Phi$ we can reduce $\phi$ to zero. This completes the proof that the homomorphisms in $N$ generate $T^2(R/I)$.

Now suppose for any edge $ab$ of $G$ the condition \eqref{eqT2van} holds.
Then $\phi^\lambda_{L_a,L_b} = \lambda \phi_{ab}$ and therefore it vanishes in $T^2(R/I)$.
Conversely if $T^2(R/I)$ vanishes then for any $L_a$ and $L_b$, either $\phi^\lambda_{L_a,L_b}$ is zero or it is equal to $\lambda \phi_{ab}$ by Lemma \ref{rem-nonvangensT2}. This means that for any $x\in (N(a)\cup N(b)) - (\{a,b\} \cup L_a \cup L_b \cup \Delta)$,
$\phi(r_{ab,bx}) = \lambda x =0$ in $R/I$. which means that $\lambda x \in I$.
This shows that \eqref{eqT2van} holds for any edge $ab$ and any $L_a$ and $L_b$.
\end{proof}

 It is worth mentioning that when $G$ does not have any 3-cycle and for an edge $ab$, both of $L_a$ and $L_b$ are nonempty then $N(a)\cup N(b) = \{a,b\} \cup L_a \cup L_b \cup \Delta$ and the condition (1) of Theorem above is satisfied. Therefore for such graphs we only need to examine condition (1) when exactly one of two sets $L_a$ and $L_b$ is nonempty.

\begin{corollary} \label{corT2}
If $G$ is a graph with no induced 3 or 4 cycles then $T^2(R/I(G))$ vanishes.
\end{corollary}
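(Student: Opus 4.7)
The plan is to invoke Theorem \ref{thmMainT2}. Since any 3-cycle on three vertices is automatically induced, the hypothesis that $G$ has no induced 3-cycles means $G$ has no 3-cycle at all, so the theorem applies, and it suffices to verify condition \eqref{eqT2van} for every edge $ab$ of $G$ and every admissible pair $(L_a,L_b)$.

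My first step is to dispose of the easy choices of $(L_a, L_b)$. If both $L_a$ and $L_b$ are nonempty, the observation stated immediately before the corollary tells us that $N(a)\cup N(b) = \{a,b\}\cup L_a \cup L_b \cup \Delta$ already holds in a triangle-free graph, so \eqref{eqT2van} is automatic. If $L_a = L_b = \emptyset$, then $\phi^{\lambda}_{L_a,L_b}$ is literally the zero map, which trivially lies in $\im \Phi$. Hence I can reduce to the case where exactly one of $L_a, L_b$ is nonempty, and by symmetry I assume $L_a\neq\emptyset$, $L_b = \emptyset$.

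The heart of the argument is to show the stronger emptiness statement
\[
(N(a)\cup N(b))\setminus\bigl(\{a,b\}\cup L_a\cup \Delta\bigr)=\emptyset,
\]
which makes the product in \eqref{eqT2van} vacuous. I would take a hypothetical $y$ in this set and run a short case analysis. Because $G$ has no 3-cycle, the neighborhoods $N(a)\setminus\{b\}$ and $N(b)\setminus\{a\}$ are disjoint (a common neighbor would complete a triangle on $ab$), so $y$ lies in exactly one of them. Since $L_b=\emptyset$ and $y\notin\Delta=\Delta^a\cup\Delta^b$, the definition of $\Delta^a,\Delta^b$ forces $y$ to be adjacent to every vertex of $L_a$. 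Picking any $u\in L_a$, the case $y\in N(a)$ immediately gives the triangle $a,u,y$, contradicting the no 3-cycle hypothesis.

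The remaining and only delicate case is $y\in N(b)\setminus N(a)$, which produces the 4-cycle $a$--$u$--$y$--$b$--$a$. The main point to verify is that this 4-cycle is induced: the chord $ay$ is not an edge since $y\notin N(a)$, and the chord $ub$ is not an edge because $u\in N(a)$ and $ab\in E(G)$, so $ub\in E(G)$ would create the triangle $a,u,b$, again forbidden. Thus the 4-cycle is induced, contradicting the hypothesis on $G$, and the emptiness claim follows. I expect this induced-chord verification to be the only nontrivial step; everything else is bookkeeping with the definitions of $\Delta^a$ and $\Delta^b$.
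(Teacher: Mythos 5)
Your proof is correct and follows the same route as the paper: both deduce the corollary from Theorem \ref{thmMainT2} by showing that $(N(a)\cup N(b))\setminus(\{a,b\}\cup L_a\cup L_b\cup\Delta)$ is empty, so that condition \eqref{eqT2van} holds vacuously. The paper dismisses this as ``easy to see by definition of $\Delta$,'' whereas you supply the actual verification (the triangle and induced-4-cycle chord checks) and also patch the degenerate case $L_a=L_b=\emptyset$, where the displayed equality genuinely fails but the associated homomorphism is zero anyway.
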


\begin{proof}
Consider an edge $ab$ in $G$. Let $L_a$ (resp. $L_b$) be a subset of $N(a)\backslash\{b\}$ (resp. $N(b)\backslash\{a\}$).
By definition of $\Delta$ it is easy to see that $N(a)\cup N(b) = \{a,b\} \cup L_a \cup L_b \cup \Delta$. Now the assertion follows from Theorem \ref{thmMainT2}.
\end{proof}

\begin{example}
Let $I$ be the edge ideal of the 4-cycle.
\begin{center}
\begin{tikzpicture}[scale=1, vertices/.style={draw,fill=black, circle, inner 
sep=1.5pt}]
\node [vertices, label=left:{$d$}] (3) at (0,0){};
\node [vertices, label=right:{$c$}] (2) at (1,0){};
\node [vertices, label=left:{$a$}] (0) at (0,1){};
\node [vertices, label=right:{$b$}] (1) at (1,1){};
\foreach \to/\from in {0/1, 1/2, 2/3, 3/0}
\draw [-,thick] (\to)--(\from);
\end{tikzpicture}
\end{center}
The ideal $I$ has 4 generators and as a subquotient of $S^4$, $K/K_0$ is generated by columns of the matrix
\[
\bordermatrix{
~ & r_1 & r_2 & r_3 & r_4\cr
\epsilon_{ab} & d & c & 0 & 0\cr
\epsilon_{bc} & 0 &-a & d & 0\cr
\epsilon_{cd} & 0 & 0 &-b & a\cr
\epsilon_{ad} &-b & 0 & 0 &-c
}
\]
in which $r_1=r_{ab,ad},r_2=r_{ab,bc},r_3=r_{bc,cd}$ and $r_4=r_{ad,cd}$.
Furthermore $\Hom_R(K/K_0,R/I)$ is generated by following 8 homomorphisms,
\begin{align*}
r_1 & \mapsto b, \qquad r_1 \mapsto d\\
r_2 & \mapsto a, \qquad r_2 \mapsto c\\
r_3 & \mapsto b, \qquad r_3 \mapsto d\\
r_4 & \mapsto a, \qquad r_4 \mapsto c
\end{align*}
where $r_i\mapsto x$ denotes the homomorphism sending $r_i$ to $x$ and any other generator of $K/K_0$ to zero.
This shows that $T^2(R/I)$ does not vanish.
It follows from Example \ref{exmT2cyc3} and Corollary \ref{corT2} that $C_4$ is the only cycle with a non-vanishing second cotangent cohomology module.
\end{example}

\begin{example} \label{exm-letterplace}
Let $P$ be a finite poset. The {\it second letterplace ideal} of $P$ is the quadratic monomial ideal in the polynomial ring $R=\kk[x_{[2]\times P}]$ generated by monomials $x_{1,p} x_{2,q}$ for any relation $p\leq q$ in $P$. We denote this ideal by $L(2,P)$. For simplicity we denote a variable $x_{i,p}$ in $R$ by $p_i$ for $i=1,2$.
In \cite[Corollary 2.13]{ABHL} it is shown that $L(2,P)$ is never rigid.
Here we show that $T^2(R/L(2,P))$ vanishes if and only if $\height(P)\leq 1$.
If $\height(P)\leq 1$ then the graph of $L(2,P)$ is a tree and the second cotangent cohomology module vanishes by Corollary \ref{corT2}.

Conversely let $p < q<r$ be a chain of length 2 in $P$.
Consider the generator $q_1q_2$ of $L(2,P)$.
Let $L_{q_1}=\emptyset$ and $L_{q_2}=\{p_1\}$. Since $L(2,P)$ does not have any 3-cycles we have $\Delta^{q_2} = N(q_2)\backslash \{p_1,q_1\}$.
Any element in $N(q_1)\backslash\{q_2\} = \{x_2| q < x\}$ is adjacent to $p_1$. Thus $\Delta^{q_1}=\emptyset$. Any element of $\Delta^{q_2}$ is of form $x_1$ for some $x\in P$ and $\Delta_{x_1} = \{y_2|x<y, y\neq q\}$. Note that $r_2$ is an element in $N(q_1)\cup N(q_2)$ that does not belong to
\[
\{q_1,q_2\} \cup L_{q_1} \cup L_{q_2} \cup \Delta= N(q_2)\cup \{q_2\}.
\]
Obviously, $r_2 \prod_{x\in \Delta} \Delta_x \nsubseteq I$. Hence $T^2(R/L(2,P))$ does not vanish.

Even though the second cotangent cohomology does not vanish it is shown in \cite{FN} that when the Hasse diagram of the poset $P$ is a rooted tree then all the first order deformations lift to higher order deformations. 
\end{example}

\bibliographystyle{alpha}
\bibliography{Bibliography}

\end{document}